\theoremstyle{plain}
 \newtheorem{thm}{Theorem}[section]
\newtheorem{thm*}{Theorem}
 \newtheorem{prop}[thm]{Proposition}
 \numberwithin{equation}{section} %% Comment out for sequentially-numbered
\numberwithin{figure}{section} %% Comment out for sequentially-numbered
 \theoremstyle{plain}
 \theoremstyle{definition}
\newcommand{\K}{{{\mathbb K}}}
\newcommand{\mH}{{{\mathbb H}}}
\newcommand{\cJ}{{{\mathcal J}}}
\newcommand{\C}{{{\mathbb C}}}
\newcommand{\R}{{{\mathbb R}}}
\newcommand{\bH}{{{\bf H}}}
\newcommand{\mO}{{{\mathbb O}}}
\newcommand{\bp}{{{\bf p}}}
\newcommand{\bv}{{{\bf v}}}
\newcommand{\bw}{{{\bf w}}}
\newcommand{\be}{{{\bf e}}}
\newcommand{\cR}{{{\mathcal R}}}
\newcommand{\fp}{{{\mathfrak p}}}
\newcommand{\mP}{{{\mathbb P}}}
\newcommand{\X}{{{\mathbb X}}}
\newcommand{\fH}{{{\mathfrak H}}}
\newcommand{\binfty}{{{\bf \infty}}}
\begin{document}

\title[Cross--ratios and Ptolemaean Inequality]{Cross--Ratios and the Ptolemaean Inequality in boundaries of  symmetric spaces of rank 1}

\author[I.D. Platis]{Ioannis D. Platis}
\subjclass{32M15, 53C17}
\keywords{Symmetric spaces, hyperbolic spaces, Ptolemaean inequality, cross--ratios}

\begin{abstract}
We use generalised  cross--ratios to prove the Ptolemaean inequality and the Theorem of Ptolemaeus in the setting of the boundary of  symmetric Riemannian spaces of rank 1 and of negative curvature.
\end{abstract}

\address{Department of Mathematics,
University of Crete, Knossos Ave., GR 71409, 
Heraklion Crete, Greece}

\email{jplatis@math.uoc.gr}

\maketitle

\centerline{\it In memoriam patris mei: Demos I. Platis, 1926--2002.}

\section{Introduction}

The Theorem of Ptolemaeus in planar Euclidean geometry states that the product of the euclidean lengths of the diagonals of an inscribed quadrilateral equals to the sum of the products of the euclidean lengths of its opposite sides. When one vertex of the quadrilateral does not lie on the circle passing from the other three vertices, then we have inequality, known as the Ptolemaean inequality.

The intrinsic significance of Ptolemaean inequality was already known in antiquity.  In the modern era and specifically in the time period covering at least the past sixty years, its generalisation to various spaces has been the study of many authors: illustratively, see the old paper of  Schoenberg  for a generalisation into normed spaces \cite{S}, the  work of Buckley, Falk and Wraith in CAT(0) spaces \cite{BFW}, and   the paper of   Buyalo and Schroeder for a more  general setting in abstract spaces \cite{BS}.% The spaces concerned there are called Ptolemaean; the intersting thing about them is that we can potentially start from such a space and characterise equivalentthese spces are interesting because in many cases they give characterisations   and the interest in these spaces lies  

In the present paper we give an elementary  proof of the Ptolemaean Inequality and the Theorem of Ptolemaeus in the boundary of  symmetric Riemannian spaces  of rank 1 and of negative curvarure. These spaces are the $n-$dimensional hyperbolic spaces $\bH^n_\K$ where $\K$ can be one of the following: i)  the set of real numbers $\R$, ii) the set of complex numbers $\C$, iii) the set of quaternions $\mH$ and iv)  the set of octonions $\mO$ (in this case $n=2$).  A Riemannian metric of negative sectional curvature is defined in spaces $\bH^n_\K$ and let $G_\K$ be the isometry group of this metric. It is well known that $G_\R={\rm SO}(n,1)$, $G_\C={\rm SU}(n,1)$, $G_\mH={\rm Sp}(n,1)$ and $G_\mO=F_{4(-20)}$. The action of $G_\K$ is extended to an action in the boundary $\partial\bH^n_\K$; the boundary is a sphere, and after applying stereographic projection we obtain a set which can be endowed with a natural structure of a Lie group $\fH_\K$. This group is the additive group $\R^{n-1}$ when $\K=\R$ and the 
generalised Heisenberg group $\K^{n-1}\times\Im(\K)$ in all other cases. A metric $d_\fH$ is defined in $\fH_\K$ and extended in a natural way to $\partial\bH^n_\K$; it is proved that its similarity group together with an inversion produce the whole isometry group $G_\K$. Note that in the case where $\K=\R$, $d_\fH$ is just the Euclidean metric in $\R^{n-1}$ and this is the only case where $d_\fH$ is a path metric.  

Let now $(X,d)$ be a metric space. The metric $d$ is called {\it Ptolemaean}  if any four distinct points $p_1$, $p_2$, $p_3$ and $p_4$ in $X$ satisfy the Ptolemaean Inequality: for any permutation $(i,j,k,l)$ in the permutation group $S_4$  we have
$$
d(p_i,p_k)\cdot d(p_j,p_l)\le d(p_i,p_j)\cdot d(p_k,p_l)+d(p_j,p_k)\cdot d(p_k,p_i).
$$
A subset $\sigma$ of  $X$ is called a {\it Ptolemaean circle} if the Theorem of Ptolemaus holds in $\sigma$. That is,  for any four distinct points $p_1$, $p_2$, $p_3$ and $p_4$ in $\sigma$ such that $p_1$ and $p_3$ separate $p_2$ and $p_4$ we have
$$
d(p_1,p_3)\cdot d(p_2,p_4)=d(p_1,p_2)\cdot d(p_3,p_4)+d(p_2,p_3)\cdot d(p_4,p_1).
 $$ 
Our main result is the following.

\medskip

{\bf Theorem.} {\it The metric $d_\fH$ in $\partial\bH^n_\K$ (or in $\fH_\K$) is Ptolemaean. Its Ptolemaean circles are $\R-$circles.}

\medskip

Its proof follows from Theorems \ref{thm:ptolemaean-ineq}, \ref{thm:ptolemaean-eq}, \ref{thm:ptolemaean-heis}, \ref{thm:ptolemaean-ineq-oct}, \ref{thm:ptolemaean-eq-oct} and  \ref{thm:ptolemaean-heis-oct}; for the definition of $\R-$circles in the boundary of $\bH^n_\K$ see Sections \ref{sec:K-Rcircles} and \ref{sec:R-sirc-O}. We only note here that in the case where $\K=\R$, $\R-$circles are the familiar Circles, i.e. Euclidean circles or straight lines. The picture in every other case is quite different.

The proof of our main result relies entirely on cross--ratios and their properties; we give a unifying exposition, distinguishing only the octonionic case. In \cite{P-cr} we developed basically the same method to prove the result for the case where $\K=\C$ and $n=2$ using Kor\'anyi--Reimann complex cross--ratios. Here we define $\K-$cross--ratios for the cases when $\K=\R,\C,\mH$ and real cross-ratios for the case where $\K=\mO$. Given a quadruple $\fp=(p_1,p_2,p_3,p_4)$ of distinct points in the boundary of $\bH^n_\K$ ($\K=\R,\C,\mH$), the cross--ratio $\X(p_1,p_2,p_3,p_4)$ is a map from $\left(\bH^n_\K\right)^4\setminus\{\text{diagonals}\}\to\K$ which is invariant by the action of $G_\K$ and satisfies certain symmetry conditions, see Section \ref{sec:cr} for details. We only note here that in the familiar case of $\K=\R$ and $n=2$ the $\R-$cross--ratio of $\fp$ is the square of the well known projective invariant
$$
[p_1,p_2:p_3,p_4]=\frac{|p_4-p_2|}{|p_4-p_1|}:\frac{|p_3-p_2|}{|p_3-p_1|}.
$$
Our $\K-$cross--ratio $\X(p_1,p_2,p_3,p_4)$ is the natural genaralisation of the above invariant. One of its properties that we use is that its modulus satisfies
$$
|\X(p_1,p_2,p_3,p_4)|^{1/2}=\frac{d_\fH(p_4,p_2)}{d_\fH(p_4,p_1)}:\frac{d_\fH(p_3,p_2)}{d_\fH(p_3,p_1)},
$$
and in the case where  $\K=\R$ we have $\X=|\X|$.
Of course, real cross--ratios are classic and complex cross--ratios ($\K=\C$, $n=2$) introduced by Kor\'anyi and Reimann in \cite{KR1} have been studied quite extensively, see for instance \cite{F}, \cite{FP} and \cite{PP}. Treatises on quaternionic cross--ratios are found in  \cite{BG} as well as in \cite{GL} but only in the case where $n=1$. 
It seems that there is no natural way to define an octonionic cross--ratio endowed with all the properties shared by its real, complex and quaternionic counterpart.  An attempt to mimick our construction in the case where $\K=\R,\C,\mH$,  or the construction in \cite{KR1}, will fail due to non associativity of the octonionic multiplication. However, there is a  real cross--ratio defined in the boundary of octonionic hyperbolic plane which is sufficient for our purposes, see Section \ref{sec:real-cr-oct}. 

In all cases, we prove that certain cross--ratios associated to a quadruple of four points in the boundary satisfy two fundamental relations, see Propositions \ref{prop-norm} and \ref{prop:X-oct}. We then obtain our main result by exploiting these relations. 

The paper is organised as follows. In Section \ref{sec:prel1} we state the basics for  complex hyperbolic space and its boundary when $\K=\R,\C,\mH$. In Section \ref{sec:cr} we introduce $\K-$cross--ratios and prove the fundamental relations which lead to our main result in this case; the latter is proved in Section \ref{sec:pt1}.  In Section \ref{sec:prelO} we describe the octonionic hyperbolic space and its boundary and finally, we prove our main result for the octonionic case in Section \ref{sec:pt2}.

\subsubsection*{Aknowledgement} The author would like to thank Zolt\'an M. Balogh and John R. Parker for some useful discussions and suggestions.  

\section{Preliminaries for the Case $\K=\R, \C, \mH$}\label{sec:prel1}

The material in this section is quite standard; we refer the reader to the books of Mostow  \cite{M}, Goldman \cite{Gol}, to the paper of Kim and Parker for the quaternionic case \cite{KP}, and to the notes of Parker \cite{P} for the complex 2--dimensional case.
In the spirit of Mostow,  we define (the Siegel domain model for) $\K-$hyperbolic space $\bH^n_\K$ in Section \ref{sec:K-hyp},  and its isometries in \ref{sec:K-isom}. In Section \ref{sec:K-H} we treat the boundary $\partial\bH^n_\K$, the induced group $\fH_\K$, the metric $d_\fH$ defined on this group, its isometries and similarities. $\R-$circles are presented in the separate Section \ref{sec:K-Rcircles}.

\subsection{$\K-$Hyperbolic Space}\label{sec:K-hyp}

Let $\K$ be $\R$, $\C$, $\mH$ and let $\K^{n,1}$ be the vector space $\K^{n+1}$ with
the Hermitian form of signature $(n,1)$ given by
$$
\left\langle {\bf {z}},{\bf {w}}\right\rangle 
={\bf w}^*J{\bf z}
=\overline{w}_{n+1}z_{1}+\overline{w}_{n}z_{n}+\dots \overline{w}_{2}z_{2}+\overline{w}_{1}z_{n+1}
$$
with matrix 
$$
J=\left[\begin{array}{ccc}
0 & 0 & 1\\
0 & I & 0\\
1 & 0 & 0\end{array}\right],
$$
where $I$ is the $(n-1)\times (n-1)$ identity matrix. The order of multplication in the form does not make a difference unless $\K=\mH$.
We consider the following subspaces of ${\K}^{n,1}$:
\begin{eqnarray*}
V_- & = & \Bigl\{{\bf z}\in\K^{n,1}\ :\ 
\langle{\bf z},\,{\bf z} \rangle<0\Bigr\}, \\
V_0 & = & \Bigl\{{\bf z}\in\K^{n,1}-\{{\bf 0}\}\ :\ 
\langle{\bf z},\,{\bf z} \rangle=0\Bigr\}.
\end{eqnarray*}
Let ${\mathbb P}:\K^{n,1}\setminus\{{\bf 0}\}\longrightarrow \K P^n$ be
the canonical projection onto $\K-$projective space; we choose 
the right projection  for the case $\K=\mH$. Then 
{\sl $\K-$hyperbolic space} ${\bf H}_{\K}^{n}$
is defined to be ${\mathbb P}V_-$ and its boundary
$\partial{\bf H}^n_{\K}$ is ${\mathbb P}V_0$.
Specifically, $\K^{n,1}\setminus\{{\bf 0}\}$ may be covered with $n+1$ charts
$H_1,\dots,H_n$ where $H_j$ comprises those points in 
$\K^{n,1}\setminus\{{\bf 0}\}$ for which $z_j\neq 0$. It is clear that
$V_-$ is contained in $H_{n+1}$. The canonical projection
from $H_{n+1}$ to $\K^n$ is given by 
${\mathbb P}({\bf z})=(z_1z_{n+1}^{-1},\,z_2z_{n+1}^{-1},\dots,z_nz_{n+1}^{-1})=z$. Therefore we can write
${\bf H}^n_{\K}={\mathbb P}(V_-)$ as
$$
{\bf H}^n_{\K} = \left\{ (z_1,\,z_2,\dots,\,z_n)\in{\K}^n
\ : \ 2\Re(z_1)+\sum_{i=2}^n|z_i|^2<0\right\},
$$
which is called the {\it Siegel domain model for} ${\bf H}^n_{\K}$; see \cite{Gol} for the case $\K=\C$  and \cite{M} for the general case. In the latter, $\K-$hyperbolic space is introduced via the {\it ball model}, that is
$$
{\bf H}^n_{\K} = \left\{ (z_1,\,z_2,\dots,\,z_n)\in{\K}^n
\ : \ \sum_{i=1}^n|z_i|^2<1\right\},
$$
but it turns out that these two definitions are equivalent.
There are distinguished points in $V_0$ which we denote by 
${\bf o}$ and $\binfty$:
$$
{\bf o}=\left[\begin{matrix} 0 \\ 0\\ \vdots\\0\\  1 \end{matrix}\right], \quad
\binfty=\left[\begin{matrix} 1 \\0\\ \vdots \\0 \\ 0 \end{matrix}\right].
$$
Then $V_0\setminus\{{\binfty}\}$ is contained in $H_{n+1}$ and 
$V_0\setminus\{{\bf o}\}$ (in particular $\infty$) is contained
in $H_1$. Let ${\mathbb P}{\bf o}=o$ and ${\mathbb P}\binfty=\infty$. 
Then we can write $\partial{\bf H}^n_{\K}={\mathbb P}(V_0)$ as
$$
\partial{\bf H}^n_{\K}\setminus\{\infty\} 
=\left\{ (z_1,\,z_2,\dots,\,z_n)\in{\K}^n
\ : \ 2\Re(z_1)+\sum_{i=2}^n|z_i|^2=0\right\}.
$$
In particular $o=(0,\dots,\,0)\in\K^n$. 
%In this manner, ${\bf H}^n_{\K}$ is the Siegel domain in 
%${\K}^n$; see \cite{Gol} and \cite{M}.

Conversely, given a point $z$ of 
${\K}^n={\mathbb P}(H_{n+1})\subset\K P^n$ we may 
lift $z=(z_1,\dots,\,z_n)$ to a point ${\bf z}$ in $H_{n+1}\subset\K^{n,1}$, 
called the {\sl standard lift} of $z$, by writing ${\bf z}$ in non-homogeneous
coordinates as
$$
{\bf z}=\left[\begin{matrix} z_1 \\ z_2\\ \vdots\\z_n \\ 1 \end{matrix}\right].
$$

The {\it Riemannian metric} on  ${\bf H}_{\K}^{n}$ is defined by the
distance function $\rho$ given by the formula
$$
\cosh^{2}\left(\frac{\rho(z,w)}{2}\right)
=\frac{\left\langle {\bf {z}},{\bf {w}}\right\rangle 
\left\langle {\bf {w}},{\bf {z}}\right\rangle }
{\left\langle {\bf {z}},{\bf {z}}\right\rangle 
\left\langle {\bf {w}},{\bf {w}}\right\rangle }
=\frac{\bigl|\langle {\bf z},{\bf w}\rangle\bigr|^2}
{|{\bf z}|^2|{\bf w}|^2}
$$
where ${\bf z}$ and ${\bf w}$ in $V_-$ are the standard lifts of $z$ and $w$ 
in ${\bf H}^n_{\K}$ and
$|{\bf z}|=\sqrt{-\langle{\bf z},{\bf z}\rangle}$.
Alternatively,
$$
ds^{2}=-\frac{4}{\left\langle {\bf {z}},{\bf {z}}\right\rangle ^{2}}
\det\left[\begin{array}{cc}
\left\langle {\bf {z}},{\bf {z}}\right\rangle  
& \left\langle d{\bf {z}},{\bf {z}}\right\rangle \\
\left\langle {\bf {z}},d{\bf {z}}\right\rangle  
& \left\langle d{\bf {z}},d{\bf {z}}\right\rangle \end{array}\right].
$$
The real sectional curvature of 
$ {\bf H}_{\K}^{n}$ is $-1/4$  when $\K=\R$ and when $\K=\C$ or $\mH$ it is pinched between $-1$ and $-1/4$.
Also, when $\K=\C$ then $\bH^n_\C$ is a complex manifold, the metric is K\"ahler (in fact, it is the {\it Bergman metric}) and  the holomorphic sectional curvature equals to $-1$.

\subsubsection{Isometries, $\K-$lines, $\R-$planes}\label{sec:K-isom}

Denote by ${\rm F}(n,1)$ the group
of unitary matrices for  the  Hermitian form 
$\left\langle \cdot,\cdot\right\rangle $. This is
\begin{enumerate}
 \item the group ${\rm O}(n,1)$ when $\K=\R$;
\item the group ${\rm U}(n,1)$ when $\K=\C$ and
\item the group ${\rm Sp}(n,1)$ when $\K=\mH$.
\end{enumerate}
Each  matrix $A\in{\rm F}(n,1) $ satisfies
the relation $A^{-1}=JA^{*}J$ where $A^{*}$ is the Hermitian transpose of $A$.
The isometry group $G_\K$ of 
$\K-$hyperbolic space is  the {\it projective group} ${\rm PF(n,1)}$. Instead, we may use the following groups for $G_\K$:
\begin{enumerate}
 \item ${\rm SO}(n,1)$ (a double cover of ${\rm PO}(n,1)$) when $\K=\R$;
\item ${\rm SU}(n,1)$ (a triple cover of ${\rm PU}(n,1)$) when $\K=\C$ and
\item ${\rm Sp}(n,1)$  (a double cover of ${\rm PSp}(n,1)$) when $\K=\mH$.
\end{enumerate}
The latter is consistent with the fact that $\bH^n_\K$ is a Riemannian symmetric space, i.e. it  is i) ${\rm SO}(n,1)/{\rm SO}(n)$ when $\K=\R$, ii) ${\rm SU}(n,1)/{\rm U}(n)$ when $\K=\C$ and iii) ${\rm Sp}(n,1)/{\rm Sp}(n)$ when $\K=\mH$. 

We shall also denote by ${\rm F}(n)$  the group of isometries of the usual (Euclidean) hermitian product in $\K^n$; that is ${\rm O}(n)$ when $\K=\R$, ${\rm U}(n)$ when $\K=\C$ and ${\rm Sp}(n)$ when $\K=\mH$ respectively.

Two kinds of subspaces of $\bH^n_\K$ are of our special interest, that is $\K-$lines and mainly $\R-$planes. For details in the complex case see \cite{Gol}, the other cases are treated similarly.

A $\K-${\it line} is an isometric image of the embedding of $\bH^1_\K=\{z\in\K\;|\;\Re(z)<0\}$ into $\bH^n_\K$. We may assume that the embedding is the standard one
$$
z\mapsto(z,0,\dots,0).
$$
The isometries preserving a $\K-$line is a subgroup of $G_\K$ isomorphic to ${\rm F}(1,1)$. %and any other $\K-$line  is the isometric image of the above standard $\K-$line.

An $\R-${\it plane} $\cR$ is a real 2-dimensional subspace of $\bH^n_\K$ characterised by $\langle \bv,\bw\rangle\in\R$ for all $\bv,\bw\in\cR$ (the latter is of course vacuous when $\K=\R$). Any real plane $\cR$ is  the isometric image %under an element of ${\rm F}(n,1)$ 
of an embedded copy of $\bH_\R^2=\{(x_1,x_2)\in\R^2\;|\;2x_1+x_2^2<0\}$ into $\bH^n_\K$; here, we may assume that the embedding is the standard one
$$
(x_1,x_2)\mapsto(x_1,x_2,0,\dots,0,0,\dots,0).
$$
The isometries preserving the plane above is a subgroup of $G_\K$ isomorphic to ${\rm PO}(2,1)$.%lie in the projectivisation of the inclusion $${\rm O}(2,1<{\rm F}(n,1),$$ which is also preserved by conjugation
%. Any other $\R-$plane $\cR$ is the isometric image of this standard plane.% under some $B\in{\rm F}(n,1)$. % and is stabilised by the conjugate of $B$ of the projectivasation of ${\rm O}(2,1)$.

\subsection{The boundary and the group $\fH_\K$}\label{sec:K-H}

A finite point $z$ is in the boundary of the Siegel domain if its standard
lift to $\K^{n,1}$ is ${\bf z}$ where
$$
{\bf z}=\left[\begin{matrix} z_1 \\ \vdots \\z_n \\ 1 \end{matrix}\right]
\quad \text{ where }\quad 2\Re(z_1) +\sum_{i=2}^n |z_i|^2 = 0.
$$
We write $$
\zeta_i=z_{i+1}/\sqrt{2},\;i=1,\dots, n-1,\quad \zeta=(\zeta_1,\dots,\zeta_{n-1})\in\K^{n-1},%\zeta_1=z_2/\sqrt{2},\quad \dots,\quad %\zeta_{n-1}=z_n/\sqrt{2}\in\K
$$ 
and this 
condition becomes $2\Re(z_1)=-2\sum_{i=1}^{n-1}|\zeta_i|^2=-2\|\zeta\|^2$, where $\|\cdot\|$ is the Euclidean norm in $\K^{n-1}$. Hence we may
write $z_1=-\|\zeta\|^2+v$ where % $z_1=-\sum_{i=1}^{n-1}|\zeta_i|^2+v$ 
\begin{enumerate}
 \item $v=0$ if $\K=\R$;
\item  $v\in\Im(\C)$, i.e $v=it$, $t\in\R$ if $\K=\C$ and
\item $v\in\Im(\mH)$, i.e. it is a purely imaginary quaternion if $\K=\mH$.
\end{enumerate}

Therefore
$$
{\bf z}
=\left[\begin{matrix} -\sum_{i=2}^n |\zeta_i|^2+v \\ \sqrt{2}\zeta_1\\ \vdots\\ \sqrt{2}\zeta_{n-1} \\1\end{matrix}\right]=
\left[\begin{matrix} -\|\zeta\|^2+v \\ \sqrt{2}\zeta \\1\end{matrix}\right].
$$
In this way, and for $n>1$, we may identify the boundary of the Siegel domain with the one point compactification of $\K^{n-1}\times\Im(\K)$, that is
\begin{enumerate}
 \item the sphere $S^{n-1}$ considered as the one point compactification of $\R^{n-1}\times\{\;0\}\simeq \R^{n-1}$ if $\K=\R$
\item the sphere $S^{2n-1}$ considered as the one point compactification of $\C^{n-1}\times\Im(\C)\simeq\C^{n-1}\times\R$ if $\K=\C$ and
\item the sphere $S^{4n-1}$ considered as the one point compactification of $\mH^{n-1}\times\Im(\mH)$ if $\K=\mH$.
\end{enumerate}
In the exceptional case $n=1$, the boundary of the Siegel domain is a single point when $\K=\R$; henceforth we shall not deal with this case.% the sphere $S^1$ when $\K=\C$ and the sphere $S^3$ when $\K=\mH$.% Omitting completely the trivial real case, we observe that the proof of our result is classical for $S^1$ and the $S^3$ case will follow from the proof for $S^3=\partial\bH^3_\R$.  Thus we shall not deal with these cases; henceforth we will always assume that $n>1$.

The action of the stabiliser of infinity ${\rm Stab}(\infty)$ gives to the set of these points the structure of a  group which we shall denote $\fH_\K$. The  group law is
$$
(\zeta,v)*(\zeta',v')=\left(\zeta+\zeta',v+v'+2\omega(\zeta,\zeta')\right)
$$
where $\omega$ is the standard symplectic product in $\K^{n-1}$ (for $\K=\R$ this is identically zero). More explicitly,
$$
(\zeta_1,\dots,\zeta_{n-1},v)*(\zeta_1',\dots,\zeta_{n-1}',v')=(\zeta_1+\zeta_1',\dots,\zeta_{n-1}+\zeta_{n-1}',v+v'+2\sum_{i=1}^{n-1}\Im(\overline{\zeta_i'}\zeta_i)).
$$
In this manner, $\fH_\K$ is
\begin{enumerate}
 \item The additive group $\R^{n-1}\times\{0\}$ if $\K=\R$.
\item The $(n-1)-$Heisenberg group $\C^{n-1}\times\R$ if $\K=\C$ and $n>1$. If $n=1$ it is isomorphic to the additive group $\R$.
\item The $(n-1)-$quaternionic Heisenberg group $\mH^{n-1}\times\Im(\mH)$ if $\K=\mH$ and $n>1$. If $n=1$ it is isomorphic to the additive group $\R^3$.
\end{enumerate}

There is a  {\it gauge} $|\cdot|_\K$ defined on $\fH_\K$  and given by
$$
\left|(\zeta,v)\right|_\K=\left|(\zeta_1,\dots,\zeta_{n-1},v)\right|_\K=\left| \sum_{i=1}^{n-1}|\zeta_i|^2+v\right|^{1/2}=\left|-\|\zeta\|^2+v\right|^{1/2},
$$
where on the right hand side we have the Euclidean norm. Observe that in the cases of $\K=\R$, the  gauge is just the Euclidean norm and the same holds for $\K\neq\R$, $n=1$. In all other cases $|\cdot|_\K$ is not a norm in the usual sense. (In the complex case it is known as the {\it Kor\'anyi--Cygan} gauge. However, from this gauge we obtain  a metric on $\fH$, which we shall denote by $d_\fH$ and is defined by the relation
$$
d_\fH\left((\zeta,v),\,((\zeta',v')\right)
=\left|((\zeta,v)^{-1}*((\zeta',v')\right|_\K.
$$
This metric is not a path metric unless $\K=\R$ or $\K\neq\R$, $n=1$ (in this case $d_\R$ is just the Euclidean metric).
%$$
%d_\fH\left((\zeta_1,\dots,\zeta_{n-1},v),\,((\zeta_1',\dots,\zeta_{n-1}',v')\right)
%=\left|((\zeta_1,\dots,\zeta_{n-1},v)^{-1}*((\zeta_1',\dots,\zeta_{n-1}',v')\right|_\K.
%$$
%Or, in other words
%$$
%d_\fH\left((\zeta_1,\dots,\zeta_{n-1},v),\,((\zeta_1',\dots,\zeta_{n-1}',v')\right)
%=\left| \sum_{i=1}^{n-1}|\zeta_i-\zeta_i'|^2-v+v'-2i\Im(z_1\bar{z}_2)\right|^{1/2}.
%$$
By taking the standard lift of points on $\partial{\bf H}^n_\K\setminus\{\infty\}$ 
to $\K^{n,1}$ we can write the  metric $d_\fH$ as:
$$
d_\fH\left((\zeta,v),\,(\zeta',v')\right)
=\left|\left\langle\left[ \begin{matrix}
-\|\zeta\|^2+v \\ \sqrt{2}\zeta\\1 \end{matrix}\right],\,
\left[ \begin{matrix}
-\|\zeta'\|^2+v' \\  \sqrt{2}\zeta' \\ 1 \end{matrix}\right]
\right\rangle\right|^{1/2}.
$$
%$$
%d_\fH\left((\zeta_1,\dots,\zeta_{n-1},v),\,(\zeta_1',\dots,\zeta_{n-1}',v')\right)
%=\left|\left\langle\left[ \begin{matrix}
%-\sum_{i=1}^{n-1}|\zeta_i|^2+v \\ \sqrt{2}\zeta_1 \\ \vdots \\ \sqrt{2}\zeta_{n-1} \\1 \end{matrix}\right],\,
%\left[ \begin{matrix}
%-\sum_{i=1}^{n-1}|\zeta_i'|^2+v' \\  \sqrt{2}\zeta_1' \\ \vdots  \\ \sqrt{2}\zeta_{n-1}' \\ 1 \end{matrix}\right]
%\right\rangle\right|^{1/2}.
%$$
The  metric $d_\fH$ is invariant under the following transformations.
\begin{enumerate}
 \item Left translations: given a point $(\zeta',v')\in\fH_\K$ we define
$$
T_{(\zeta',v')}(\zeta,v)=(\zeta',v')*(\zeta,v).
$$
Left translations are essentially the left action of $\fH_\K$ on itself.
% $T_{(\zeta',v')$the left action of $\fH$, $(\zeta_1,\dots,\zeta_{n-1},v)\to(\zeta_1',\dots,\zeta_{n-1}',v')*(\zeta_1,\dots,\zeta_{n-1},v)$ 
We mention here that in the case where $\K=\R$, $d_\fH$  is also invariant under the right action of $\fH_\K=\R^{n-1}$ to itself. The same holds for the case $\K\neq\R$, $n=1$.
%\item the vertical translations $(\zeta_1,\dots,\zeta_{n-1},v)\mapsto (\zeta_1,\dots,\zeta_{n-1},v+w)$, $w\in\Im(\K)$;
\item Rotations ($n>1$): these come from the action of ${\rm F}(n-1)$ on $\K^{n-1}$. That is, given a $U\in{\rm F}(n-1)$ we define
$$
S_U(\zeta,v)=(U\cdot\zeta,v).
$$
%$(\zeta_1,\dots,\zeta_{n-1},v)\mapsto(U(\zeta_1,\dots,\zeta_{n-1}),v)$, $U\in{\rm F}(n-1)$ and 
\item Only in the case where $\K=\mH$ we have the action of ${\rm F}(1)={\rm Sp}(1)$ given by
$$
(\zeta_1,\dots,\zeta_{n-1},v)\mapsto (\mu\zeta_1\mu^{-1},\dots,\mu\zeta_{n-1}\mu^{-1},\mu v\mu^{-1}),\quad \mu\in{\rm Sp}(1);
$$
observe that in all other cases this action is vacuous.
\end{enumerate}These actions form the group ${\rm Isom}(\fH_\K,d_\fH)$ of $d_\fH-${\it isometries}; this acts transitively on $\fH_\K$. The stabiliser of $0$ consists of transformations of the form (2) (resp. of the form (2) and (3)) if $\K\neq\mH$ (resp. if $\K=\mH$). All the above transformations are extended naturally (and uniquely) on the boundary $\partial\bH_\K^n$, by requiring the extended transformations to map $\infty$ to itself.

We also consider two other kinds of transformations of $\partial\bH_\K^n$.
\begin{enumerate}
\item[{(4)}] Dilations: if $\delta\in\R_*^+$ we define
$$
D_\delta(\zeta,v)=(\delta\zeta,\delta^2v),\quad D_\delta(\infty)=\infty.
$$
It is easy to see that for every $(\zeta,v),(\zeta',v')\in\partial\bH_\K^n$ we have
$$
d_\fH\left(D_\delta(\zeta,v),D_\delta(\zeta',v')\right)=\delta\;d_\fH\left((\zeta,v),(\zeta',v')\right)
$$
and thus the metric $d_\fH$ is  scaled up to multiplicative constants by the action of  dilations. We mention here that together with $d_\fH-$isometries, dilations form the $d_\fH-${\it similarity group} ${\rm Sim}(\fH_\K,d_\fH)$.
\item[{(5)}] Inversion $R$ is given by
$$
R(\zeta,v)=\left(\zeta(-\|\zeta\|^2+v)^{-1}\;,\;\overline{v}\left|-\|\zeta\|^2+v\right|^{-2}\right),\;\;\text{if}\;(\zeta,v)\neq o,\infty,\;\quad R(o)=\infty,\;R(\infty)=o.
$$
Inversion $R$ is an involution of $\partial\bH^n_\K$. Moreover, for all $p=(\zeta,v),p'=(\zeta',v')\in\fH_\K\setminus\{o\}$ we have
$$
d_\fH(R(p),o)=\frac{1}{d_\fH(p,o)},\quad d_\fH(R(p),R(p'))=\frac{d_\fH(p,p')}{d_\fH(p,o)\;d_\fH(o,p')}.
$$
\end{enumerate}
%$$
%(\zeta_1,\dots,\zeta_{n-1},v)\mapsto(r\zeta_1,\dots,r\zeta_{n-1},r^2v),\quad r\in\R_*^+.
%$$

\medskip

The group generated from similarities and inversion is isomorphic to $G_\K$ ; each transformation of $G_\K$  can be written as a composition of transformations of the form (1)--(5). The stabiliser of $0$ and $\infty$ is the subgroup comprising the (extended) transformations of the form (2),(3) and (4). Given two distinct points on the boundary, we can find an element of $G_\K$  mapping those points to $0$ and $\infty$ respectively; in particular $G_\K$  acts doubly transitively on the boundary. In the exceptional case where $\K=\R$, the action of $G_\R$  is triply transitive; this follows from the fact that we can map three distinct points of the boundary to the points $0$, $\infty$ and $(1,0,\dots,0)$ respectively.

\subsubsection{$\R-$circles}\label{sec:K-Rcircles}

An $\R-${\it circle} is the intersection of a totally real plane with the boundary $\partial\bH^n_\K$. The {\it standard} $\R-$circle (passing through $0$ and $\infty$) is the set 
$$
R_\R=\left\{(x,0,\dots,0,0)\in\fH_\K\;|\;x\in\R\right\}.
$$
Any other $\R-$circle is the image of $R_\R$ via an element of $G_\K$. Given two distinct points on the boundary, there is a (unique) $\R-$circle passing through these points. This does not hold in general for the case of three points unless $\K=\R$. We have already noted in the introduction that $\R-$circles are Circles (that is Euclidean circles and straight lines) only in the case where $\K=\R$. In all other cases the picture is quite different, see for instance \cite{Gol} or \cite{P} and \cite{KP}. Note that the above definition does not cover the case of $\R-$circles in the boundary of $\bH^1_\C$. In this case the boundary itself, i.e. the circle $S^1$ will be considered as the unique $\R-$circle.

\section{$\K-$Cross--Ratios ($\K=\R,\C,\mH$)}\label{sec:cr}

In this section we define the $\K-$cross--ratio of four distinct points in the boundary of $\bH^n_\K$ and we study some of their properties in Section \ref{sec:cr-prop}. In Section \ref{sec:cr-fund} we prove two fundamental relations in Proposition \ref{prop-norm}. In particular, Inequality \ref{eq-variety2} is the tool for the proof of our main result. 

\subsection{$\K-$Cross--Ratios and their properties}\label{sec:cr-prop}

Given a  quadruple of distinct points $\fp=(p_1,p_2,p_3,$ $p_4)$ in $\partial\bH_\K^n$,  their  cross--ratio is defined by
$$
\X(p_1,p_2,p_3,p_4)=\langle{\bf p}_4,{\bf p}_2\rangle\langle{\bf p}_4,{\bf p}_1\rangle^{-1}\langle{\bf p}_3,{\bf p}_1\rangle\langle{\bf p}_3,{\bf p}_2\rangle^{-1}
$$
where ${\bf p}_i,$ are lifts of $p_i,\;i=1,\dots, 4$ in $\K^{n,1}$. The order of multiplication plays a role only in the case where $\K=\mH$. It is clear that $[p_1,p_2,p_3,p_4]=\X(p_1,p_2,p_3,p_4)$ is invariant under the action of $G_\K$. An important observation is that the cross--ratio is independent of the choice of lifts in the cases when $\K=\R$ or $\C$ but {\it not} when $\K=\mH$.  Indeed, if ${\bf q}_i={\bf p}_i\lambda_i,\;i=1,\dots,4$ where $\lambda_i\in\K$ then 
$$
\langle{\bf q}_4,{\bf q}_2\rangle\langle{\bf q}_4,{\bf q}_1\rangle^{-1}\langle{\bf q}_3,{\bf q}_1\rangle\langle{\bf q}_3,{\bf q}_2\rangle^{-1}={\overline \lambda_2}\langle{\bf p}_4,{\bf p}_2\rangle\langle{\bf p}_4,{\bf p}_1\rangle^{-1}\langle{\bf p}_3,{\bf p}_1\rangle\langle{\bf p}_3,{\bf p}_2\rangle^{-1}{\overline \lambda_2}^{-1}.
$$
In case where $\K=\mH$ there is no cancellation of the $\lambda_i$'s; therefore in general, the cross ratios are only defined up to similarity. In other words the  invariant
quantities obtained by the cross ratio $\X$ are $|\X|$ and $\Re(\X)$.

The square root of the absolute value of the $\K-$cross--ratio is
\begin{equation*}
 |\X(p_1,p_2,p_3,p_4)|^{1/2}=\frac{d_\fH(p_4,p_2)\cdot d_\fH(p_3,p_1)}{d_\fH(p_4,p_1)\cdot d_\fH(p_3,p_2)},
\end{equation*}
where here $d_\fH$ denotes the extended metric of $\fH_\K$.
\medskip

The proof of the following proposition is by direct computations. 

\medskip

\begin{prop}\label{prop-symmetries}
The following symmetric relations hold:
\begin{eqnarray*}
 &&\label{sym1}
|[p_1,p_2,p_3,p_4]|=|[p_2,p_1,p_4,p_3]|=|[p_3,p_4,p_1,p_2]|=|[p_4,p_3,p_2,p_1]|,\\
&&\label{sym2}
\Re([p_1,p_2,p_3,p_4])=\Re([p_2,p_1,p_4,p_3])=\Re([p_3,p_4,p_1,p_2])=\Re([p_4,p_3,p_2,p_1]).
\end{eqnarray*}
\end{prop}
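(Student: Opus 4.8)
The plan is to turn the stated ``direct computation'' into a short structural argument by isolating the three algebraic facts that do all the work. I would first abbreviate $a_{ij}=\langle{\bf p}_i,{\bf p}_j\rangle$, so that by definition
$$[p_1,p_2,p_3,p_4]=a_{42}\,a_{41}^{-1}\,a_{31}\,a_{32}^{-1}.$$
Then I would record the facts to be used repeatedly: (i) the Hermitian symmetry $a_{ij}=\overline{a_{ji}}$; (ii) the multiplicativity of the modulus, $|xy|=|x|\,|y|$ for all $x,y\in\K$ (valid because $\R,\C,\mH$ are normed division algebras), together with $|\overline{x}|=|x|$ and $\overline{x^{-1}}=\overline{x}^{-1}$; and (iii) the cyclicity of the real part, $\Re(xy)=\Re(yx)$, so that $\Re(x_1\cdots x_m)$ is invariant under cyclic permutation of the factors, together with $\Re(\overline{x})=\Re(x)$ and $\overline{xy}=\overline{y}\,\overline{x}$.

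Next I would expand each of the remaining three cross--ratios in terms of the $a_{ij}$. For $[p_2,p_1,p_4,p_3]$ the definition gives $a_{31}\,a_{32}^{-1}\,a_{42}\,a_{41}^{-1}$, which is exactly the cyclic shift by two positions of the product defining $[p_1,p_2,p_3,p_4]$. For $[p_3,p_4,p_1,p_2]$ and $[p_4,p_3,p_2,p_1]$ the definition produces products in the entries $a_{ij}$ with $i\in\{1,2\}$ and $j\in\{3,4\}$; applying $a_{ij}=\overline{a_{ji}}$ and then the order reversal $\overline{xy}=\overline{y}\,\overline{x}$ rewrites each of them as the conjugate of a cyclic shift of $a_{42}\,a_{41}^{-1}\,a_{31}\,a_{32}^{-1}$. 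Thus in all three cases the permuted cross--ratio is, up to one overall conjugation, a cyclic rearrangement of the same four factors.

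The conclusion is then immediate from the two invariance principles. For the modulus, multiplicativity makes $|a_{42}\,a_{41}^{-1}\,a_{31}\,a_{32}^{-1}|$ independent of the order of the factors, and $|\overline{x}|=|x|$ absorbs the conjugation, so all four moduli agree. For the real part, cyclicity of $\Re$ handles the reshuffling and $\Re(\overline{x})=\Re(x)$ absorbs the conjugation, giving equality of all four real parts.

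The one genuine subtlety, and the step I would be most careful about, is the quaternionic case $\K=\mH$, where multiplication is noncommutative and the cross--ratio is only well defined up to conjugation. There one may not simply commute factors past one another: the argument must pass \emph{exclusively} through the multiplicativity of the norm and through the cyclic-plus-conjugation invariance of $\Re$, and one has to track the order reversal in $\overline{xy}=\overline{y}\,\overline{x}$ correctly when converting the entries $a_{ij}$ into the $a_{ji}$ appearing in $[p_1,p_2,p_3,p_4]$. In the cases $\K=\R$ and $\K=\C$ everything commutes and conjugation is trivial or scalar, so these identities degenerate to the classical symmetries of the cross--ratio.
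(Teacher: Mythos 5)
Your proof is correct and is precisely the ``direct computation'' that the paper invokes without writing out: setting $a_{ij}=\langle{\bf p}_i,{\bf p}_j\rangle$, one indeed finds $[p_2,p_1,p_4,p_3]=a_{31}a_{32}^{-1}a_{42}a_{41}^{-1}$, a cyclic shift of $[p_1,p_2,p_3,p_4]=a_{42}a_{41}^{-1}a_{31}a_{32}^{-1}$, while $[p_3,p_4,p_1,p_2]$ and $[p_4,p_3,p_2,p_1]$ become, via $a_{ij}=\overline{a_{ji}}$ and $\overline{xy}=\overline{y}\,\overline{x}$, conjugates of cyclic shifts of the same product. Your reduction to multiplicativity of the norm and to the cyclic-plus-conjugation invariance of $\Re$ is exactly what makes the identities survive in the noncommutative case $\K=\mH$, where the cross--ratio itself is only defined up to similarity but $|\X|$ and $\Re(\X)$ are well defined, so nothing is missing.
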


\medskip

Proposition \ref{prop-symmetries} tells us that for a given quadruple of distinct points in $\partial{\bf H}^n_\K$, the moduli and the real parts of all 24 quaternionic cross--ratios are real analytic functions of the moduli and the real parts respectively of the following three cross--ratios:
\begin{eqnarray}
&&\label{X1}
 \X_1=[p_1,p_2,p_3,p_4]=\langle{\bf p}_4,{\bf p}_2\rangle\langle{\bf p}_4,{\bf p}_1\rangle^{-1}
\langle{\bf p}_3,{\bf p}_1\rangle\langle{\bf p}_3,{\bf p}_2\rangle^{-1},\\
&&\label{X2}
\X_2=[p_1,p_3,p_2,p_4]=\langle{\bf p}_4,{\bf p}_3\rangle\langle{\bf p}_4,{\bf p}_1\rangle^{-1}
\langle{\bf p}_2,{\bf p}_1\rangle\langle{\bf p}_2,{\bf p}_3\rangle^{-1},\\
&&\label{X3}
\X_3=[p_2,p_3,p_1,p_4]=\langle{\bf p}_4,{\bf p}_3\rangle\langle{\bf p}_4,{\bf p}_2\rangle^{-1}
\langle{\bf p}_1,{\bf p}_2\rangle\langle{\bf p}_1,{\bf p}_3\rangle^{-1}.
\end{eqnarray}
In fact, we have

\medskip

\begin{prop}
Let $\X_i$, $i=1,2,3$ be as in \ref{X1}, \ref{X2} and \ref{X3} respectively. 
%$$
%[p_1,p_2,p_3,p_4]=\X_1,\quad [p_1,p_3,p_2,p_4]=\X_2,\;\text{and}\quad[p_2,p_3,p_1,p_4]=\X_3.
%$$
Then,
\begin{eqnarray*}
 &&\label{sym3}
[p_1,p_2,p_4,p_3]=\X_1^{-1},\\
&&\label{sym4}
[p_1,p_3,p_4,p_2]=\X_2^{-1},\\
&&\label{sym5}
|[p_1,p_4,p_3,p_2]|=1/|\X_3|,\quad \Re([p_1,p_4,p_3,p_2])=\Re(\X_3^{-1}),\\
&&\label{sym6}
|[p_1,p_4,p_2,p_3]|=|\X_3|,\quad \Re([p_1,p_4,p_2,p_3])=\Re(\X_3).
\end{eqnarray*}
\end{prop}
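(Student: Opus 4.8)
The plan rests on a single exact identity together with Proposition~\ref{prop-symmetries}. First I would record the \emph{tail-swap identity}: for any four distinct boundary points $a,b,c,d$ and any fixed choice of lifts,
$$[a,b,d,c]=[a,b,c,d]^{-1}.$$
This is immediate from the definition of $\X$. Writing $[a,b,c,d]=\langle{\bf d},{\bf b}\rangle\langle{\bf d},{\bf a}\rangle^{-1}\langle{\bf c},{\bf a}\rangle\langle{\bf c},{\bf b}\rangle^{-1}$ as a product $ABCD$, its inverse is $D^{-1}C^{-1}B^{-1}A^{-1}=\langle{\bf c},{\bf b}\rangle\langle{\bf c},{\bf a}\rangle^{-1}\langle{\bf d},{\bf a}\rangle\langle{\bf d},{\bf b}\rangle^{-1}$, which is exactly $[a,b,d,c]$. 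Since both sides use the same lifts, this is a genuine equality of elements of $\K$, valid also in the non-commutative case $\K=\mH$. Applying it with $(a,b,c,d)=(p_1,p_2,p_3,p_4)$ and with $(p_1,p_3,p_2,p_4)$ yields at once $[p_1,p_2,p_4,p_3]=\X_1^{-1}$ and $[p_1,p_3,p_4,p_2]=\X_2^{-1}$, the first two relations.

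For the fourth relation I would simply invoke Proposition~\ref{prop-symmetries}. The symmetry $|[q_1,q_2,q_3,q_4]|=|[q_3,q_4,q_1,q_2]|$ (and the analogous one for real parts), applied to the quadruple $(q_1,q_2,q_3,q_4)=(p_2,p_3,p_1,p_4)$, sends $\X_3=[p_2,p_3,p_1,p_4]$ to $[p_1,p_4,p_2,p_3]$. Hence $|[p_1,p_4,p_2,p_3]|=|\X_3|$ and $\Re([p_1,p_4,p_2,p_3])=\Re(\X_3)$, which is precisely the fourth relation.

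The third relation then follows by combining the two previous observations. By the tail-swap identity, $[p_1,p_4,p_3,p_2]=[p_1,p_4,p_2,p_3]^{-1}$, so taking moduli and using the fourth relation gives $|[p_1,p_4,p_3,p_2]|=1/|[p_1,p_4,p_2,p_3]|=1/|\X_3|$. For the real part I would use the elementary fact that for any nonzero $q\in\K$ one has $\Re(q^{-1})=\Re(q)/|q|^2$, so that $\Re(q^{-1})$ is determined by $\Re(q)$ and $|q|$ alone. Since $[p_1,p_4,p_2,p_3]$ and $\X_3$ share both their real part and their modulus, their inverses share their real part; therefore $\Re([p_1,p_4,p_3,p_2])=\Re([p_1,p_4,p_2,p_3]^{-1})=\Re(\X_3^{-1})$.

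The only point requiring care—the main obstacle—is the non-commutativity and lift-dependence of the quaternionic cross--ratio, which is exactly why the third and fourth relations are phrased in terms of modulus and real part rather than as exact equalities. The tail-swap identity circumvents this for the first two relations, because inversion of a product reverses the order compatibly with the defining expression; for the remaining two, the passage through Proposition~\ref{prop-symmetries} together with the observation about $\Re(q^{-1})$ is precisely what keeps everything within the lift-invariant quantities $|\X|$ and $\Re(\X)$.
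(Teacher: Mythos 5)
Your proof is correct and takes essentially the approach the paper intends: the paper states this proposition without a written proof, leaving it as a direct computation in the spirit of Proposition \ref{prop-symmetries}, and your tail--swap identity $[a,b,d,c]=[a,b,c,d]^{-1}$ (an exact equality for fixed lifts, since inverting the four-factor product reverses the order compatibly with the definition), combined with Proposition \ref{prop-symmetries} applied to the relabelled quadruple $(p_2,p_3,p_1,p_4)$ and the identity $\Re(q^{-1})=\Re(q)/|q|^2$, is precisely that verification. You also correctly isolate the one delicate point—the lift-dependence of the quaternionic cross--ratio—by noting that the first two relations transform covariantly under change of lifts while the last two are stated only in terms of the lift-invariant quantities $|\X|$ and $\Re(\X)$.
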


\medskip

In the cases where $\K=\R$ or $\C$, the 24 cross--ratios corresponding to a quadruple $\fp=(p_1,p_2,p_3,$ $p_4)$ are themselves real analytic functions of $\X_i$, $i=1,2,3$. Moreover, in the case where $\K=\R$, the cross--ratio of any quadruple is a real number (different from 0 and 1) and $\X_3$ is just $\X_2/\X_1$. Therefore in this case the cross--ratios of a given quadruple depend only in the (real) cross--ratios $\X_1$ and $\X_2$. A trivial case appears in the case where $n=1$ and $\K=\C$. Then we have the relations
$$
\X_3=-\X_2\X_1^{-1}\quad\text{and}\quad \X_1+\X_2=1.
$$
The same relations hold in the case where $n=1$ and $\K=\mH$ but they are dependent from the choice of lifts. For quaternionic cross--ratios in the one dimensional case, see \cite{BG} and \cite{GL}.

\subsection{Two fundamental relations for $\K-$cross--ratios}\label{sec:cr-fund}

Two relations concerning cross--ratios which hold in any case and are independent of the choice of lifts are given in the next proposition. These relations are well known as equalities defining Falbel's cross--ratio variety in the case where $n=2$ and $\K=\C$, see for instance \cite{F}, \cite{FP} and \cite{PP}. For a somewhat different treatment of this case, see also \cite{CG}. In our setting, these equalities are following from the next general result.
%We observe that the symmetry condition
%$$
%[p_1,p_2,p_3,p_4]=[p_1,p_2,p_4,p_3]^{-1}
%$$
%is the only relation independent of the choice of lifts in all cases.

\medskip

\begin{prop}\label{prop-norm}
 Let $\fp=(p_1,p_2,p_3,p_4)$ be a quadruple of four distinct points in $\partial{\bf H}^n_\K$. Let also $\X_1,\X_2$ and $\X_3$ 
be defined by \ref{X1}, \ref{X2} and \ref{X3}. Then
\begin{eqnarray}
 &&\label{eq-variety1}
|\X_2|=|\X_1|\,|\X_3|,\\
&&\label{eq-variety2}
2|\X_1|^2\Re(\X_3)\ge|\X_1|^2+|\X_2|^2-2\Re(\X_1)-2\Re(\X_2)+1.
\end{eqnarray}
%In cases when $n=1$, $\K\neq\R$ and $n=2$, Inequality \ref{eq-variety2} holds as an equality. In general, for $n\ge 2$, consider the projection
%\begin{equation*}
%\Pi_\K:\partial\bH^n_\K\to\left(\K^{n-1}\times\{\;0\}\right)\cup\{\infty\},\quad \Pi(\zeta,v)=(\zeta,0)\;\;\Pi(\infty)=\infty.
%\end{equation*}
%Then equality in \ref{eq-variety2} holds if and only if the projections of two of the points of $\fp$ lie in the same orbit of the stabiliser of the other two.
\end{prop}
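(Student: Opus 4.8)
The plan is to handle the two relations separately: \ref{eq-variety1} is pure bookkeeping, while \ref{eq-variety2} reduces, after a normalisation, to a single application of the Cauchy--Schwarz inequality.

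For \ref{eq-variety1}, write $a_{ij}=\langle{\bf p}_i,{\bf p}_j\rangle$ and recall $|a_{ij}|=|a_{ji}|$. The three moduli $|\X_1|,|\X_2|,|\X_3|$ are then monomials in the $|a_{ij}|$ with exponents $\pm1$, and on multiplying $|\X_1|$ by $|\X_3|$ the factors cancel telescopically to leave $|\X_2|$; equivalently this is the obvious identity among the distance ratios $|\X_i|^{1/2}$ recorded before Proposition \ref{prop-symmetries}. Only moduli occur, so the computation is valid over $\mH$.

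For \ref{eq-variety2} I would first normalise. The modulus and real part of a cross--ratio are $G_\K$--invariant and $G_\K$ acts (doubly) transitively on $\partial\bH^n_\K$, so I may carry $p_4$ to $\infty$ without altering any $|\X_i|$ or $\Re(\X_i)$. With the standard lift ${\bf p}_4=\binfty$ one has $\langle{\bf p}_i,\binfty\rangle=1$ for the three finite points, whence the definitions collapse to $\X_1=a_{31}a_{32}^{-1}$, $\X_2=a_{21}a_{23}^{-1}$, $\X_3=a_{12}a_{13}^{-1}$ (all $a_{ij}\ne0$, since distinct null lines are never orthogonal in signature $(n,1)$). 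Putting $a=a_{12}$, $b=a_{13}$, $c=a_{23}$, a substitution that uses only $|x|^2=x\bar x$ and $\Re(xy)=\Re(yx)$ --- and so is indifferent to the non--commutativity of $\mH$ --- shows that, after clearing the positive factor $|c|^2$, inequality \ref{eq-variety2} is equivalent to
$$
|a|^2+|b|^2+|c|^2-2\Re(a\bar b)-2\Re(b\bar c)-2\Re(ac)\le 0.
$$

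It remains to recognise this as Cauchy--Schwarz. Set ${\bf d}_1={\bf p}_1-{\bf p}_3$ and ${\bf d}_2={\bf p}_2-{\bf p}_3$; being differences of finite points they satisfy $\langle{\bf d}_i,\binfty\rangle=1-1=0$, so both lie in $\binfty^\perp$, on which the ambient form of signature $(n,1)$ is positive semi--definite (its radical being the null line $\langle\binfty\rangle$). A short computation gives $\langle{\bf d}_1,{\bf d}_1\rangle=-2\Re(b)$, $\langle{\bf d}_2,{\bf d}_2\rangle=-2\Re(c)$ and $\langle{\bf d}_1,{\bf d}_2\rangle=a-b-\bar c$, so Cauchy--Schwarz, $|\langle{\bf d}_1,{\bf d}_2\rangle|^2\le\langle{\bf d}_1,{\bf d}_1\rangle\langle{\bf d}_2,{\bf d}_2\rangle$, reads $|a-b-\bar c|^2\le4\Re(b)\Re(c)$, which is exactly the displayed inequality after expanding the left side. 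As Cauchy--Schwarz holds for any positive semi--definite Hermitian form over $\R$, $\C$ or $\mH$, all three cases are settled uniformly.

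The genuinely routine parts are the two expansions --- of $\X_1,\X_2,\X_3$ in terms of $a,b,c$, and of $|a-b-\bar c|^2$. The points requiring care, and the ones I expect to be the main obstacle, are ensuring that every intermediate quantity is reduced to real parts and moduli so that the argument is truly uniform over the non--commutative $\mH$, and correctly matching the expanded Cauchy--Schwarz inequality with \ref{eq-variety2}; the geometric input, namely positive semi--definiteness of the form on $\binfty^\perp$, is standard and is the conceptual core --- \ref{eq-variety2} is nothing but the statement that three boundary points together with $\infty$ span a subspace carrying a single negative direction.
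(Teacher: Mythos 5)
Your proof is correct, and although it shares the paper's skeleton --- $G_\K$--invariance of $|\X_i|$ and $\Re(\X_i)$, a normalisation, then a single Cauchy--Schwarz --- the implementation is genuinely different. The paper normalises \emph{two} points ($p_1\mapsto\infty$, $p_4\mapsto o$), writes the remaining points in Heisenberg coordinates $(\zeta_i,v_i)$, and applies Cauchy--Schwarz to $\langle\langle\zeta_3,\zeta_2\rangle\rangle$ on $\K^{n-1}$, carrying the imaginary parts $v_2,v_3$ through a fairly long expansion around the auxiliary quantity $p=-\|\zeta_2\|^2-\|\zeta_3\|^2+2\langle\langle\zeta_3,\zeta_2\rangle\rangle+\overline{v_2}+v_3$. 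You normalise only $p_4\mapsto\infty$, keep the three Gram coefficients $a,b,c$ as the sole unknowns (your formulas $\X_1=a_{31}a_{32}^{-1}$, $\X_2=a_{21}a_{23}^{-1}$, $\X_3=a_{12}a_{13}^{-1}$ are right, and $a_{ij}\neq0$ does follow since a $2$--dimensional totally isotropic subspace cannot exist in signature $(n,1)$), and apply Cauchy--Schwarz to the ambient form restricted to $\binfty^{\perp}$, evaluated on ${\bf p}_1-{\bf p}_3$ and ${\bf p}_2-{\bf p}_3$. The two applications are the same inequality in different clothes: $\binfty^{\perp}$ modulo its radical $\langle\binfty\rangle$ is precisely $\K^{n-1}$ with the product $\langle\langle\cdot,\cdot\rangle\rangle$, and your difference vectors project to differences of $\zeta$'s --- but your route never introduces the $v_i$ at all, needs only transitivity rather than double transitivity, and isolates the real geometric content, namely that \ref{eq-variety2} expresses positive semi--definiteness of the signature--$(n,1)$ form on the orthocomplement of a null vector. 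Your intermediate computations check: $\langle{\bf d}_1,{\bf d}_1\rangle=-2\Re(b)$, $\langle{\bf d}_2,{\bf d}_2\rangle=-2\Re(c)$, $\langle{\bf d}_1,{\bf d}_2\rangle=a-b-\bar c$, and Cauchy--Schwarz indeed survives both semi--definiteness and quaternionic scalars. One pedantic point: matching $|a-b-\bar c|^2\le 4\Re(b)\Re(c)$ with your displayed inequality requires, beyond expanding the left side (which produces $+2\Re(bc)$, not $-2\Re(b\bar c)$), the identity $2\Re(b)\Re(c)=\Re(bc)+\Re(b\bar c)$ --- harmless, valid in $\mH$, and in fact the very identity the paper itself invokes in the octonionic analogue, Proposition \ref{prop:X-oct}. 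Your handling of \ref{eq-variety1} coincides with the paper's one--line ``evident from the definitions.''
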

\begin{proof}
Equation \ref{eq-variety1} is evident from the definitions of $\X_i$, $i=1,2,3$. To show the validity 
of Inequality \ref{eq-variety2} we observe that due to the double--transitive 
action of ${\rm PF}(n,1)$ on the boundary, it is always possible to normalise the points so that
the quadruple is
$$
p_1=\infty, \quad p_2=(\zeta_2,v_2),\quad p_3=(\zeta_3,v_3),\quad p_4=o,
$$
where  $\zeta_i=(\zeta_i^1,\dots,\zeta_i^{n-1})$, $i=2,3$.
We consider standard lifts
\begin{equation*}
{\bf p}_1=\left[\begin{matrix} 1 \\ 0 \\ 0 \end{matrix}\right],\quad
{\bf p}_2=\left[\begin{matrix} -\|\zeta_2\|^2+v_2 \\ \sqrt{2}\zeta_2 \\ 1 \end{matrix}\right],\quad
{\bf p}_3=\left[\begin{matrix} -\|\zeta_3\|^2+v_3 \\ \sqrt{2}\zeta_3\\ 1 \end{matrix}\right],\quad
{\bf p}_4=\left[\begin{matrix} 0 \\ 0 \\ 1 \end{matrix}\right]
\end{equation*}
%\begin{equation*}
%{\bf p}_1=\left[\begin{matrix} 1 \\ 0 \\ \vdots\\0\\ 0 \end{matrix}\right],\quad
%{\bf p}_2=\left[\begin{matrix} -\sum_{i=1}^{n-1}|\zeta_2^i|^2+v_2 \\ \sqrt{2}\zeta_2^1\\ \vdots\\ \sqrt{2}\zeta_2^{n-1} \\ 1 \end{matrix}\right],\quad
%{\bf p}_3=\left[\begin{matrix} -\sum_{i=1}^{n-1}|\zeta_3^i|^2+v_3 \\ \sqrt{2}\zeta_3^1\\ \vdots\\ \sqrt{2}\zeta_3^{n-1} \\ 1 \end{matrix}\right],\quad
%{\bf p}_4=\left[\begin{matrix} 0 \\ 0\\ \vdots \\0 \\ 1 \end{matrix}\right]
%\end{equation*}
%and for simplicity we write $\zeta_i=(\zeta_i^1,\dots,\zeta_i^{n-1})$, $i=2,3$. We have
and we calculate
\begin{eqnarray*}
&&
\langle\bp_1,\bp_2\rangle=\langle\bp_1,\bp_3\rangle=\langle\bp_1,\bp_4\rangle=1,\\
&&
\langle\bp_2,\bp_3\rangle=-\|\zeta_2\|^2-\|\zeta_3\|^2+2\langle\langle \zeta_2,\zeta_3\rangle\rangle+v_2+\overline{v_3},\quad 
\langle\bp_2,\bp_4\rangle=-\|\zeta_2\|^2+v_2,\\
&&
\langle\bp_3,\bp_4\rangle=-\|\zeta_3\|^2+v_3.
\end{eqnarray*}
Therefore
\begin{eqnarray*}
&&
\X_1=\left(-\|\zeta_2\|^2+\overline{v_2}\right)\left(-\|\zeta_2\|^2-\|\zeta_3\|^2+2\langle\langle\zeta_3,\zeta_2\rangle\rangle+\overline{v_2}+v_3\right)^{-1},\\
&&
\X_2=\left(-\|\zeta_3\|^2+\overline{v_3}\right)\left(-\|\zeta_2\|^2-\|\zeta_3\|^2+2\langle\langle\zeta_2,\zeta_3\rangle\rangle+\overline{v_3}+v_2\right)^{-1},\\
&&
\X_3=\left(-\|\zeta_3\|^2+\overline{v_3}\right)\left(-\|\zeta_2\|^2+\overline{v_2}\right)^{-1}.
\end{eqnarray*}
%Equation \ref{eq-variety1} follows immediately. To prove Inequality \ref{eq-variety2} we first
We set $$p=-\|\zeta_2\|^2-\|\zeta_3\|^2+2\langle\langle \zeta_3,\zeta_2\rangle\rangle+\overline{v_2}+v_3.$$ Then
\begin{eqnarray*}
2|p|^2|\X_1|^2\Re(\X_3)&=&2\left|-\|\zeta_2\|^2+\overline{v_2}\right|^2\;\Re\left(\frac{(-\|\zeta_3\|^2+\overline{v_3})(-\|\zeta_2\|^2+v_2)}{\left|-\|\zeta_2\|^2+\overline{v_2}\right|^2}\right)\\
&=&2\left(\|\zeta_3\|^2\|\zeta_2\|^2-\Re(v_2v_3)\right)\\
&\ge&4\left|\langle\langle\zeta_3,\zeta_2\rangle\rangle\right|^2-2\left(\|\zeta_2\|^2\|\zeta_3\|^2+\Re(v_2v_3)\right)\\
&=&\left|(-\|\zeta_2\|^2+\overline{v_2})+(-\|\zeta_3\|^2+v_3)-p\right|^2-2\Re\left((-\|\zeta_2\|^2+v_2)(-\|\zeta_3\|^2+\overline{v_3})\right)\\
&=&\left|-\|\zeta_2\|^2+\overline{v_2}\right|^2+\left|-\|\zeta_3\|^2+\overline{v_3}\right|^2\\
&&-2\Re\left((-\|\zeta_2\|^2+\overline{v_2})\overline{p}\right)-2\Re\left((-\|\zeta_3\|^2+\overline{v_3})p\right)+|p|^2\\
&=&|p|^2\left(|\X_1|^2+|\X_2|^2-2\Re(\X_1)-2\Re(\X_2)+1\right).
\end{eqnarray*}
Here, the inequality in the third line follows from Cauchy--Schwarz inequality (which holds in every inner product space). 
\end{proof}

\medskip

Although Equation \ref{eq-variety1} of Proposition \ref{prop-norm} is obvious from the symmetry conditions, Inequality \ref{eq-variety2} is neither trivial nor obvious at all. It is natural to ask when it holds as an equality. We observe that 
for $n>2$, there are quadruples so that Inequality \ref{eq-variety2} is strict. Take for instance $p_i$, $i=1,\dots,4$ to be
$$
p_1=\infty,\quad p_2=(1,0,\dots,0,0),\quad p_3=(0,0,\dots,-1,0),\quad p_4=o. 
$$
%with lifts
%$$
%{\bf p}_1=\left[\begin{matrix} 1 \\ 0 \\ \vdots\\0\\ 0 \end{matrix}\right],\quad
%{\bf p}_2=\left[\begin{matrix} -1 \\ \sqrt{2}\\0\\ \vdots\\ 0 \\ 1 \end{matrix}\right],\quad
%{\bf p}_3=\left[\begin{matrix} -1 \\0 \\ \vdots\\0\\-\sqrt{2}\\1\end{matrix}\right],\quad
%{\bf p}_4=\left[\begin{matrix} 0 \\ 0\\ \vdots \\0 \\ 1 \end{matrix}\right].
%$$
Then $\X_1=\X_2=1/2$, $\X_3=1$ and
$$
\frac{1}{2}=2|\X_1|^2\Re(\X_3)>|\X_1|^2+|\X_2|^2-2\Re(\X_1)-2\Re(\X_2)+1=-\frac{1}{2}.
$$
The cases where Inequality \ref{eq-variety2} holds as an equality are treated in the next proposition.

\medskip

\begin{prop}\label{prop-norm-eq}
With the hypotheses of Proposition \ref{prop-norm}, Inequality \ref{eq-variety2} holds as an equality if and only if one of the following cases occur:
\begin{enumerate}
\item[{(i)}] $n=1$, $\K\neq \R$.
\item[{(ii)}] $n=2$.
\item[{(iii)}] $p_1,p_2,p_4$ or $p_1,p_3,p_4$  lie in the same $\K-$line.
\item[{(iv)}] All points of $\fp$ lie in the same $\K-$line.
\item[{(v)}] The points $p'_i=\Pi_{\K^{n-1}}(p_i)$, $i=2,3$ define the same point in the projective space $\K P^{n-2}$. Here $\Pi_{\K^{n-1}}$ is the natural projection from $\partial\bH^n_\K$ to $\K^{n-1}\cup\{\;\infty\}$ given by
$$
\Pi_{\K^{n-1}}(\zeta,v)=\zeta\;\;\text{if}\;\;(\zeta,v)\in\fH_\K,\quad \Pi_{\K^{n-1}}(\infty)=\infty.
$$
\end{enumerate}
\end{prop}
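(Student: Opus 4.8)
The plan is to revisit the proof of Proposition \ref{prop-norm} and isolate the single place where an inequality, rather than an identity, was used. After the normalisation $p_1=\infty$, $p_2=(\zeta_2,v_2)$, $p_3=(\zeta_3,v_3)$, $p_4=o$, every step in that computation is an exact equality except the passage from the second to the third displayed line, which is precisely the Cauchy--Schwarz estimate
$$
\|\zeta_2\|^2\,\|\zeta_3\|^2\ \ge\ \left|\langle\langle\zeta_3,\zeta_2\rangle\rangle\right|^2
$$
in the Hermitian space $\K^{n-1}$. Hence Inequality \ref{eq-variety2} is an equality if and only if this Cauchy--Schwarz inequality is an equality, that is, if and only if $\zeta_2,\zeta_3\in\K^{n-1}$ are $\K$--linearly dependent (one a scalar multiple of the other, or one of them zero). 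Since both \ref{eq-variety2} and all the geometric conditions (i)--(v) are $G_\K$--invariant, there is no loss in carrying out the entire analysis in these normalised coordinates.

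It then remains to translate ``$\zeta_2,\zeta_3$ are $\K$--linearly dependent'' into the list (i)--(v). First I would dispose of the low--dimensional ambient cases: when $n=1$ we have $\zeta_2,\zeta_3\in\K^{0}=\{0\}$, and when $n=2$ we have $\zeta_2,\zeta_3\in\K$, a one--dimensional $\K$--space; in both situations the vectors are automatically dependent, so equality always holds, yielding (i) and (ii). For $n>2$ dependence is a genuine condition, and I would split it according to which of the $\zeta_i$ vanish. Recalling that the boundary of the $\K$--line through $o$ and $\infty$ is exactly $\{(0,v):v\in\Im(\K)\}\cup\{o,\infty\}$, the condition $\zeta_2=0$ says precisely that $p_2$ lies on that $\K$--line, i.e. that $p_1,p_2,p_4$ lie in a common $\K$--line; symmetrically $\zeta_3=0$ gives $p_1,p_3,p_4$ in a common $\K$--line, and $\zeta_2=\zeta_3=0$ forces all four points onto one $\K$--line. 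This produces (iii) and (iv). Finally, when both $\zeta_2,\zeta_3$ are nonzero, dependence means $\zeta_3=\zeta_2\lambda$ for some $\lambda\in\K^{*}$, which is exactly the assertion that $\Pi_{\K^{n-1}}(p_2)$ and $\Pi_{\K^{n-1}}(p_3)$ define the same point of $\K P^{n-2}$, giving (v).

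The converse direction (each of (i)--(v) forces the Cauchy--Schwarz equality) is immediate from this dictionary. The one point deserving care---and the main obstacle---is the equality analysis of Cauchy--Schwarz over the \emph{non--commutative} quaternions: for $\K=\mH$ I must invoke the correct form of the equality case for quaternionic Hermitian inner products, namely that $|\langle\langle\zeta_3,\zeta_2\rangle\rangle|=\|\zeta_2\|\,\|\zeta_3\|$ holds if and only if $\zeta_2$ and $\zeta_3$ are right--$\mH$--proportional, and to verify that the projective identification in (v) is the one induced by the right scalar action, consistent with the right projection fixed in Section \ref{sec:K-hyp}. The remaining verifications---that the $\K$--line through $o$ and $\infty$ has the claimed boundary, and that passing to the normalised configuration affects none of the invariant conditions---are direct and present no real difficulty.
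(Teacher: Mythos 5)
Your proposal is correct and takes essentially the same route as the paper: the published proof likewise isolates the Cauchy--Schwarz step in the proof of Proposition \ref{prop-norm} as the only non-identity, and reads off the cases exactly as you do (equality automatic for $n=2$ and vacuous for $n=1$, $\K\neq\R$, giving (i)--(ii); one or both $\zeta_i$ zero giving (iii)--(iv); proportionality giving (v)). Your explicit attention to the right-scalar form of the quaternionic Cauchy--Schwarz equality case is a detail the paper glosses over (it writes $\zeta_2=\lambda\zeta_3$ without specifying the side), so your version is if anything slightly more careful.
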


\begin{proof}
 From the proof of Inequality \ref{eq-variety2} we see that for $n=2$ (and vacuously for $n=1$, $\K\neq\R$) this inequality is in fact an equality; these are cases  (i) and (ii). Under our normalisation, the Cauchy--Schwarz inequality 
$$
\left|\langle\langle\zeta_2,\zeta_3\rangle\rangle\right|\le\|\zeta_2\|\;\|\zeta_3\|
$$
holds as an equality if and only if one or both of the $\zeta_i$'s are zero, or there exists a $\lambda\in\K$ such that $\zeta_2=\lambda\zeta_3$. The first case gives (iii) and (iv). The latter case gives (v) and the proof is complete.
\end{proof}

\medskip

We further remark that in the case where $\K=\R$, $n\neq 1$, the cross--ratio of a quadruple of four points and thus all $\X_i$, $i=1,2,3$ are positive numbers and therefore are identical to their absolute values. In fact, under our normalisation we have
$$
\X_1=\frac{\|\zeta_2\|^2}{\|\zeta_2-\zeta_3\|^2},\quad \X_2=\frac{\|\zeta_3\|^2}{\|\zeta_2-\zeta_3\|^2}
$$
and we have already mentioned that $\X_3=\X_2/\X_1$. Also, in case (iii) we have $\X_i\in\R$ and
$$
\X_1+\X_2=1,\quad \X_3=-\X_2/\X_1
$$
and obviously this case (as well as case (iii)) never appears when $\K=\R$. Finally, the statement of (v) may be also read as

\medskip

\begin{enumerate}
\item[{(v*)}] The points $p_2'$ and $p_3'$ are in the same orbit of the stabiliser of $p_1$ and $p_4$.
\end{enumerate}

\medskip

For $\K=\R$ this is equivalent to $p_2$ and $p_3$ are in the same orbit of the stabiliser of $p_1$ and $p_4$ as well and thus all points of $\fp$ lie in an $\R-$circle.

\section{Ptolemaean Inequality and Ptolemaeus' Theorem I}\label{sec:pt1}

In this section we prove  the  $\partial\bH_\K^n$ ($\K=\R,\C,\mH$)  version (and subsequently the $\fH_\K$ version) of the Ptolemaean inequality and  of the Theorem of Ptolemaeus respectively; these are derived almost immediately from the properties of cross--ratios and especially from Inequality \ref{eq-variety2}. We start by proving the Ptolemaean inequality in $\partial\bH^n_\K$.

\medskip

\begin{thm}\label{thm:ptolemaean-ineq}{\bf (Ptolemaean inequality  in $\partial\bH^n_\K$)}
Let $\fp=(p_1,p_2,$ $p_3,p_4)$ a quadruple of distinct points in $\partial \bH_\K^n$,  $\K=\R,\C,\mH$, and consider the cross--ratios $\X_i=\X_i(\fp)$, $i=1,2,$ defined by \ref{X1} and \ref{X2} respectively. Then  the following inequalities hold:
 \begin{equation}\label{eq:ptolemaean-inequality-cr}
  |\X_1|^{1/2}+|\X_2|^{1/2}\ge 1,\quad \text{and}\quad -1\le|\X_1|^{1/2}-|\X_2|^{1/2}\le 1.
 \end{equation}
 In the case where $\K=\R$ the absolute values may be omitted.
\end{thm}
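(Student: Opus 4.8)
The plan is to derive both inequalities purely algebraically from the two relations of Proposition \ref{prop-norm}, since all the geometric content (in particular the Cauchy--Schwarz step) is already packaged there. The first move is to eliminate $\X_3$ and the real parts from Inequality \ref{eq-variety2} by crude modulus bounds. Using $\Re(\X_3)\le|\X_3|$ together with Equation \ref{eq-variety1} (which gives $|\X_1|^2|\X_3|=|\X_1|\,|\X_2|$), the left-hand side of \ref{eq-variety2} is bounded above by $2|\X_1|\,|\X_2|$, so that
\[
|\X_1|^2+|\X_2|^2-2\Re(\X_1)-2\Re(\X_2)+1\le 2|\X_1|\,|\X_2|.
\]
Then I would apply $\Re(\X_i)\le|\X_i|$ for $i=1,2$ to the negative terms on the left, which only decreases that side, yielding the fully modular inequality
\[
|\X_1|^2+|\X_2|^2-2|\X_1|-2|\X_2|+1\le 2|\X_1|\,|\X_2|.
\]

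The second step is the decisive algebraic observation. Writing $a=|\X_1|^{1/2}\ge 0$ and $b=|\X_2|^{1/2}\ge 0$, the last display rearranges to
\[
(a^2-b^2)^2-2(a^2+b^2)+1\le 0.
\]
Setting $P=a+b$ and $Q=a-b$, so that $a^2-b^2=PQ$ and $a^2+b^2=(P^2+Q^2)/2$, the left-hand side factors cleanly:
\[
(a^2-b^2)^2-2(a^2+b^2)+1=P^2Q^2-P^2-Q^2+1=(P^2-1)(Q^2-1),
\]
and the whole inequality collapses to the single product condition $(P^2-1)(Q^2-1)\le 0$.

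The final step exploits the sign constraint that comes for free from $a,b\ge 0$, namely $|Q|=|a-b|\le a+b=P$, whence $Q^2\le P^2$. If $Q^2>1$ then also $P^2>1$ and the product $(P^2-1)(Q^2-1)$ would be strictly positive, a contradiction; hence $Q^2\le 1$, i.e. $|a-b|\le 1$, which is the second assertion. Given $Q^2\le 1$ the factor $Q^2-1$ is nonpositive, so the product condition forces $P^2\ge 1$ (directly when $Q^2<1$, and via $P\ge|Q|=1$ when $Q^2=1$); thus $a+b\ge 1$, the Ptolemaean inequality. For $\K=\R$ one has $\X_1,\X_2>0$ with $\Re(\X_i)=|\X_i|=\X_i$, so the modulus estimates above hold with equality and the absolute values may be dropped throughout.

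I expect no serious obstacle here, as the theorem is a short consequence of the already-proved Proposition \ref{prop-norm}; the only points requiring care are bookkeeping the direction of the inequality through the two crude $\Re\le|\cdot|$ estimates, and verifying that the boundary case $Q^2=1$ still yields $P\ge 1$. The clean factorization $(P^2-1)(Q^2-1)\le 0$, once spotted via the substitution $P=a+b$, $Q=a-b$, does essentially all of the work.
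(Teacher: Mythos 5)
Your proof is correct and follows essentially the same route as the paper: both arguments reduce Inequality \ref{eq-variety2} via $\Re(\X_i)\le|\X_i|$ and Equation \ref{eq-variety1} to $|\X_1|^2+|\X_2|^2-2|\X_1|-2|\X_2|+1\le 2|\X_1|\,|\X_2|$, factor it as $\bigl((|\X_1|^{1/2}-|\X_2|^{1/2})^2-1\bigr)\bigl((|\X_1|^{1/2}+|\X_2|^{1/2})^2-1\bigr)\le 0$ (your $P,Q$ substitution is just a tidier way of exhibiting the same factorization), and conclude by the same contradiction using $|a-b|\le a+b$. If anything, your explicit treatment of the boundary case $Q^2=1$ is slightly more careful than the paper's.
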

\begin{proof}
%From the left inequality \ref{eq:ineq-X} we have
From Inequality \ref{eq-variety2} and using the obvious inequality $\Re(\X_i)\le|\X_i|$, $i=1,2,3$,  we have
\begin{eqnarray*}
0&\ge& |\X_1|^2+|\X_2|^2-2\Re(\X_1)-2\Re(\X_2)+1-2|\X_1|^2\Re(\X_3)\\
&\ge & |\X_1|^2+|\X_2|^2-2|\X_1|-2|\X_2|+1-2|\X_1|^2|\X_3|\\
\text{from\;Eq. \ref{eq-variety1}}&=&|\X_1|^2+|\X_2|^2-2|\X_1|-2|\X_2|+1-2|\X_1||\X_2|\\
%0&\ge&|\X_1|^2+|\X_2|^2-2\Re(\X_1)-2\Re(\X_2)+1-2|\X_1||\X_2|\\
%&\ge&|\X_1|^2+|\X_2|^2-2|\X_1|-2|\X_2|+1-2|\X_1||\X_2|\\
&=&(|\X_1|+|\X_2|-1)^2-4|\X_1||\X_2|\\
&=&(|\X_1|+|\X_2|-2|\X_1|^{1/2}|\X_2|^{1/2}-1)\cdot (|\X_1|+|\X_2|+2|\X_1|^{1/2}|\X_2|^{1/2}-1)\\
&=&\left((|\X_1|^{1/2}-|\X_2|^{1/2})^2-1\right)\cdot\left((|\X_1|^{1/2}+|\X_2|^{1/2})^2-1\right),
\end{eqnarray*}
and observe that in the case $\K=\R$ the second inequality holds as an equality. Therefore,
$$
(|\X_1|^{1/2}-|\X_2|^{1/2}-1)\cdot(|\X_1|^{1/2}-|\X_2|^{1/2}+1)\cdot(|\X_1|^{1/2}+|\X_2|^{1/2}-1)\cdot(|\X_1|^{1/2}+|\X_2|^{1/2}+1)\le 0
$$
and since $|\X_1|^{1/2}+|\X_2|^{1/2}+1>0$, this reduces to
$$
(|\X_1|^{1/2}-|\X_2|^{1/2}-1)\cdot(|\X_1|^{1/2}-|\X_2|^{1/2}+1)\cdot(|\X_1|^{1/2}+|\X_2|^{1/2}-1)\le 0.
$$
Suppose that $\left(|\X_1|^{1/2}-|\X_2|^{1/2}\right)^2> 1$; then 
$$
1<\left(|\X_1|^{1/2}-|\X_2|^{1/2}\right)^2<\left(|\X_1|^{1/2}+|\X_2|^{1/2}\right)^2
$$
which is a contradiction.
%applying triangle inequality we are reduced to a contradiction since we must also have $\left(|\X_1|^{1/2}+|\X_2|^{1/2}\right)^2> 1$. 
Therefore, $\left(|\X_1|^{1/2}-|\X_2|^{1/2}\right)^2\le 1$ and $\left(|\X_1|^{1/2}+|\X_2|^{1/2}\right)^2\ge 1$ 
%Suppose that $|\X_1|^{1/2}-|\X_2|^{1/2}> 1$; then  $|\X_1|^{1/2}-|\X_2|^{1/2} > -1$ and we must have $|\X_1|^{1/2}+|\X_2|^{1/2}\le 1$. But from the triangle inequality,
%$$
%1< |\X_1|^{1/2}-|\X_2|^{1/2}\le |\X_1|^{1/2}+|\X_2|^{1/2}\le 1,
%$$
%from which it follows  $\X_2=0$, a contradiction. Therefore,  $|\X_1|^{1/2}-|\X_2|^{1/2}\le 1$.
%Suppose finally that $|\X_1|^{1/2}-|\X_2|^{1/2}< -1$; then  $|\X_2|^{1/2}-|\X_1|^{1/2}\ge 1$ and we must have  $|\X_1|^{1/2}+|\X_2|^{1/2}\le 1$. Again, from the same triangle inequality as above, it follows  $\X_2=0$.
%Thus  $|\X_1|^{1/2}-|\X_2|^{1/2}\ge -1$ and  $|\X_1|^{1/2}+|\X_2|^{1/2}\ge 1$
which proves the Ptolemaean inequality.
\end{proof}

\medskip

Ptolemaus' Theorem in $\partial\bH^n_\K$ is in order next.

\medskip

\begin{thm}{(\bf Ptolemaeus' Theorem in $\partial\bH^n_\K$)}\label{thm:ptolemaean-eq}
 Each inequality  \ref{eq:ptolemaean-inequality-cr} holds if and only if all four points of $\fp$ lie in an $\R-$circle. Then, $\X_i>0$, $i=1,2$ and 
\begin{enumerate}
 \item $\X_1^{1/2}-\X_2^{1/2}=1$ if $p_1$ and $p_3$ separate $p_2$ and $p_4$;
 \item $\X_2^{1/2}-\X_1^{1/2}=1$ if  $p_1$ and $p_2$ separate $p_3$ and $p_4$;
 \item $\X_1^{1/2}+\X_2^{1/2}=1$ if  $p_1$ and $p_4$ separate $p_2$ and $p_3$.
\end{enumerate}
\end{thm}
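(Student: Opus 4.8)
The plan is to prove both implications after using the double transitivity of $\mathrm{PF}(n,1)$ to normalise $p_1=\infty$ and $p_4=o$, so that $p_2=(\zeta_2,v_2)$, $p_3=(\zeta_3,v_3)$ and the explicit expressions for $\X_1,\X_2,\X_3$ computed in the proof of Proposition \ref{prop-norm} are available throughout. The three numbered cases will be read off at the very end from the cyclic position of the four points on the $\R$-circle.

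For the direction ``$\R$-circle $\Rightarrow$ equality'' I would first reduce to the standard $\R$-circle. Since there is a unique $\R$-circle through $p_1$ and $p_4$, and the stabiliser of $\{0,\infty\}$ (rotations, and for $\K=\mH$ the $\mathrm{Sp}(1)$-action) acts transitively on the $\R$-circles through $0$ and $\infty$, I may assume that $p_2,p_3$ lie on $R_\R$, i.e. $v_2=v_3=0$ and $\zeta_2,\zeta_3$ are real multiples of a single vector; equivalently $p_2=x_2$, $p_3=x_3$ with $x_2,x_3\in\R$. Substituting into the formulas of Proposition \ref{prop-norm} gives $\X_1=x_2^2/(x_2-x_3)^2$ and $\X_2=x_3^2/(x_2-x_3)^2$, both positive, so $\X_1^{1/2}=|x_2|/|x_2-x_3|$ and $\X_2^{1/2}=|x_3|/|x_2-x_3|$. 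The separation type on the circle $\R\cup\{\infty\}$ then fixes the signs: when $p_1,p_3$ separate $p_2,p_4$ the point $x_3$ lies between $0$ and $x_2$, so $|x_2-x_3|=|x_2|-|x_3|$ and $\X_1^{1/2}-\X_2^{1/2}=1$; the two remaining orderings give cases (2) and (3) in the same way. This settles the ``if'' part and simultaneously identifies the three cases.

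For the converse ``equality $\Rightarrow$ $\R$-circle'' I would trace the chain of inequalities in the proof of Theorem \ref{thm:ptolemaean-ineq}. Writing $A$ for the quantity $|\X_1|^2+|\X_2|^2-2\Re(\X_1)-2\Re(\X_2)+1-2|\X_1|^2\Re(\X_3)$ and $B$ for its relaxation obtained by replacing each $\Re(\X_i)$ by $|\X_i|$, that proof shows $0\ge A\ge B$ and that $B$ equals the product $\big((|\X_1|^{1/2}-|\X_2|^{1/2})^2-1\big)\big((|\X_1|^{1/2}+|\X_2|^{1/2})^2-1\big)$. Equality in one of the inequalities \ref{eq:ptolemaean-inequality-cr} forces this product, hence $B$, to vanish, and then $0\ge A\ge B=0$ gives $A=0$ and $A=B$. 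The equality $A=B$ means $\Re(\X_i)=|\X_i|$ for $i=1,2,3$, and since the cross--ratios of distinct points are nonzero this says $\X_1,\X_2,\X_3\in\R_{>0}$; the equality $A=0$ is equality in Inequality \ref{eq-variety2}, so Proposition \ref{prop-norm-eq} applies and, away from degenerate sub-cases, yields the Cauchy--Schwarz equality $\zeta_2=\lambda\zeta_3$ for some $\lambda\in\K$.

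It remains to show that $\X_1,\X_2,\X_3\in\R_{>0}$ together with $\zeta_2=\lambda\zeta_3$ force $v_2=v_3=0$ and $\lambda\in\R$, for then $p_2,p_3$ lie on the real line $\R\zeta_3$ with vanishing vertical coordinate, which is an $\R$-circle through $0$ and $\infty$; the case assignment then follows from the first part. Positivity of $\X_3$ gives $\|\zeta_3\|^2=\X_3\|\zeta_2\|^2$ and $v_3=\X_3 v_2$, while positivity of $\X_1$ and of $\X_2$, after the substitution $\zeta_2=\lambda\zeta_3$, each splits into a real part (which merely recomputes $\X_1=|\lambda|^2/|\lambda-1|^2$ and $\X_2=1/|\lambda-1|^2$) and an imaginary part. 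The two imaginary parts form a homogeneous linear system in the unknowns $\Im(\lambda)$ and $\Im(v_2)$, whose determinant I expect to equal $-2\X_2\neq0$; hence $\Im(\lambda)=0$ and $v_2=0$, and then $v_3=0$. I anticipate that the main obstacle is precisely this last computation: it must be carried out keeping careful track of the order of multiplication in the quaternionic case (both in $\langle\langle\cdot,\cdot\rangle\rangle$ and in the cross--ratios), and the degenerate configurations where some $\zeta_i$ vanishes (a point on the vertical axis) or the exceptional low-dimensional case $n=1$ must be disposed of separately; in particular one must check that a genuine chain configuration, although it also makes the $\X_i$ real, violates the positivity $\X_i>0$ and is therefore correctly excluded.
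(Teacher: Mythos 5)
Your proposal is correct, and its overall skeleton (normalise via double transitivity, trace equality through the chain of inequalities in Theorem \ref{thm:ptolemaean-ineq} to get $\X_1,\X_2,\X_3>0$, then compute in standard position) coincides with the paper's; but your endgame is genuinely different, and both variants work. The paper never invokes Proposition \ref{prop-norm-eq}: instead it extracts the two relations \ref{eq:rel1}--\ref{eq:rel2} from $\X_1=a_1>0$, $\X_2=a_2>0$, combines them with $a_2=a_1a_3$ and a conjugation to get $v_2=\overline{v_2}=0$ (hence $v_3=0$ and $\langle\langle\zeta_3,\zeta_2\rangle\rangle\in\R$) \emph{without} any proportionality assumption, and only then obtains $\zeta_2=\lambda\zeta_3$ with $\lambda>1$ \emph{real} from the Euclidean triangle--inequality equality $\|\zeta_2\|-\|\zeta_3\|=\|\zeta_2-\zeta_3\|$ forced by $\X_1^{1/2}-\X_2^{1/2}=1$; this last step yields the separation statement at the same time, whereas you recover the case assignment afterwards from the converse direction plus mutual exclusivity of the three equalities (valid). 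Your route through Cauchy--Schwarz equality plus a linear system also closes, and I checked your anticipated computation: writing the imaginary parts of the two positivity relations as a system in $\Im(v_2)$ and $\Im(\lambda)$ (over $\K=\mH$ one should take the right--proportionality $\zeta_2=\zeta_3\lambda$ consistent with the right projection, or better, take as second unknown $w=\Im\langle\langle\zeta_3,\zeta_2\rangle\rangle$, in which case no proportionality is needed at all and the system has real scalar coefficients acting diagonally on $\Im(\mH)$), the determinant equals $2(a_2+a_1a_3)=4\X_2>0$ rather than your guessed $-2\X_2$ --- nonzero either way, so $\Im(v_2)=0$ and $\Im(\lambda)=0$ (resp.\ $w=0$) as you need; note that with $w$ as unknown your system is precisely the paper's subtraction trick in disguise. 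Your handling of the degenerate branches is also sound: if $\zeta_2=0$ or $\zeta_3=0$ the real part of the relation $\X_3=a_3>0$ forces $\|\zeta_3\|^2=0$ or $\|\zeta_2\|^2=0$, killing case (iii), and a genuine chain (case (iv)) gives $\X_3=-\X_2/\X_1<0$, contradicting $\X_3>0$, exactly as you predicted; the paper glosses over these points, so your explicit attention to them (and to the quaternionic order of multiplication) is a small gain in completeness, while the paper's triangle--equality argument is the more economical way to get reality and positivity of $\lambda$ in one stroke. Your one--pass derivation of positivity from $A=B$ and $A=0$ is a mild streamlining of the paper's two--stage argument (first $\X_1,\X_2>0$ from $(|\X_1|-|\X_2|)^2\le 2\Re(\X_1+\X_2)-1$, then $\X_3>0$ from a second application of Inequality \ref{eq-variety2}).
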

\begin{proof}
Suppose first that one of the inequalities holds as an equality. We are going to prove first that all $\X_i$, $i=1,2,3$ are positive. This is already known for the case where $\K=\R$ but the following arguments do not affect this fact. We have,
$$
(|\X_1|^{1/2}-|\X_2|^{1/2}-1)\cdot(|\X_1|^{1/2}-|\X_2|^{1/2}+1)\cdot(|\X_1|^{1/2}+|\X_2|^{1/2}-1)\cdot(|\X_1|^{1/2}+|\X_2|^{1/2}+1)= 0,
$$
which is equivalent to
$$
(|\X_1|-|\X_2|)^2= 2(|\X_1|+|\X_2|)-1.
$$
Since $
(|\X_1|-|\X_2|)^2\le 2\Re(\X_1+\X_2)-1,
$
we have $|\X_1|+|\X_2|\le \Re(\X_1+\X_2)$ and therefore
$$
0\ge\Re(\X_2)-|\X_2|\ge |\X_1|-\Re(\X_1)\ge 0.
$$
Thus $\X_1,\X_2$ are positive. Now from Inequality \ref{eq-variety2} it follows
\begin{eqnarray*}
 2\X_1^2\Re(\X_3)&\ge&\X_1^2+\X_2^2-2\X_1-2\X_2+1\\
&=&\X_1^2+\X_2^2-(\X_1-\X_2)^2=2\X_1\X_2\\
\text{using \;Eq.}\; \ref{eq-variety1}\quad &=&2\X_1^2|\X_3|,
\end{eqnarray*}
and thus $\X_3>0$. Summing up, we have that $\X_i>0$ for each $i=1,2,3$ and moreover, since the above inequality holds as an equality we have 
\begin{eqnarray*}
 0&=&\X_1^2+\X_2^2+1-2\X_1-2\X_2-2\X_1\X_2\\
&=&\left(\X_1^{1/2}+\X_2^{1/2}+1\right)\left(\X_1^{1/2}+\X_2^{1/2}-1\right)\left(\X_1^{1/2}-\X_2^{1/2}+1\right)\left(\X_1^{1/2}-\X_2^{1/2}-1\right)
\end{eqnarray*}
and thus at least one of the three equalities in the statement of the theorem holds true. 

%HERE WE NEED A PROPOSITION ANALOGOUS TO Proposition 5.12 (ii)  in \cite{PP} SO THAT all points have to lie in a Lagrangian plane.

With no loss of generality we may suppose that the equation in question is
$$
\X_1^{1/2}-\X_2^{1/2}=1
$$
and we may also assume as in the proof of Proposition \ref{prop-norm} that $\fp$ is the quadruple
$$
p_1=\infty, \quad p_2=(\zeta_2,v_2),\quad p_3=(\zeta_3,v_3),\quad p_4=o,
$$
where  $\zeta_i=(\zeta_1^1,\dots,\zeta_i^{n-1})$, $i=2,3$. Then,
%$$
%p_1=\infty, \quad p_2=(\zeta_2^1,\dots,\zeta_2^{n-1},v_2),\quad p_3=(\zeta_3^1,\dots,\zeta_3^{n-1},v_3),\quad p_4=(0,0).
%$$
%We consider standard lifts
%\begin{equation*}
%{\bf p}_1=\left[\begin{matrix} 1 \\ 0 \\ \vdots\\0\\ 0 \end{matrix}\right],\quad
%{\bf p}_2=\left[\begin{matrix} -\sum_{i=1}^{n-1}|\zeta_2^i|^2+v_2 \\ \sqrt{2}\zeta_2^1\\ \vdots\\ \sqrt{2}\zeta_2^{n-1} \\ 1 \end{matrix}\right],\quad
%{\bf p}_3=\left[\begin{matrix} -\sum_{i=1}^{n-1}|\zeta_3^i|^2+v_3 \\ \sqrt{2}\zeta_3^1\\ \vdots\\ \sqrt{2}\zeta_3^{n-1} \\ 1 \end{matrix}\right],\quad
%{\bf p}_4=\left[\begin{matrix} 0 \\ 0\\ \vdots \\0 \\ 1 \end{matrix}\right]
%\end{equation*}
%and for simplicity we write $\zeta_i=(\zeta_i^1,\dots,\zeta_i^{n-1})$, $i=2,3$. We have
%\begin{eqnarray*}
%&&
%\langle\bp_1,\bp_2\rangle=\langle\bp_1,\bp_3\rangle=\langle\bp_1,\bp_4\rangle=1,\\
%&&
%\langle\bp_2,\bp_3\rangle=-\|\zeta_2\|^2-\|\zeta_3\|^2+2\langle\langle \zeta_2,\zeta_3\rangle\rangle+v_2+\overline{v_3},\quad 
%\langle\bp_2,\bp_4\rangle=-\|\zeta_2\|^2+v_2,\\
%&&
%\langle\bp_3,\bp_4\rangle=-\|\zeta_3\|^2+v_3
%\end{eqnarray*}
%and therefore
\begin{eqnarray*}
&&
\X_1=\left(-\|\zeta_2\|^2+\overline{v_2}\right)\left(-\|\zeta_2\|^2-\|\zeta_3\|^2+2\langle\langle\zeta_3,\zeta_2\rangle\rangle+\overline{v_2}+v_3\right)^{-1},\\
&&
\X_2=\left(-\|\zeta_3\|^2+\overline{v_3}\right)\left(-\|\zeta_2\|^2-\|\zeta_3\|^2+2\langle\langle\zeta_2,\zeta_3\rangle\rangle+\overline{v_3}+v_2\right)^{-1},\\
&&
\X_3=\left(-\|\zeta_3\|^2+\overline{v_3}\right)\left(-\|\zeta_2\|^2+\overline{v_2}\right)^{-1}.
\end{eqnarray*}
From $\X_3=a_3>0$ we have $-\|\zeta_3\|^2+\overline{v_3}=-a_3\|\zeta_2\|^2+a_3\overline{v_2}$, therefore
$$
\|\zeta_3\|=a_3^{1/2}\|\zeta_2\|,\quad v_3=a_3\;v_2.
$$
Also, from $\X_i=a_i>0$, $i=1,2$ we obtain the relations
\begin{eqnarray}
&&\label{eq:rel1}
\left[1-a_1(a_3+1)\right]\|\zeta_2\|^2+2a_1\langle\langle\zeta_3,\zeta_2\rangle\rangle+(a_1-1)\overline{v_2}+a_1a_3v_2=0,\\
&&\label{eq:rel2}
\left[a_3-a_2(a_3+1)\right]\|\zeta_2\|^2+2a_2\langle\langle\zeta_2,\zeta_3\rangle\rangle+a_3(a_2-1)\overline{v_2}+a_2v_2=0.
\end{eqnarray}
Since $a_2=a_1a_3$ we write the Equation \ref{eq:rel2} as
$$
\left[\frac{a_2}{a_1}-a_1a_3(a_3+1)\right]\|\zeta_2\|^2+2a_1a_3\langle\langle\zeta_2,\zeta_3\rangle\rangle+a_3(a_1a_3-1)\overline{v_2}+a_1a_3v_2=0
$$
and by taking out $a_3$ as a common factor and then taking conjugates, we write this as
$$
\left[1-a_1(a_3+1)\right]\|\zeta_2\|^2+2a_1\langle\langle\zeta_3,\zeta_2\rangle\rangle+(a_1a_3-1)v_2+a_1\overline{v_2}=0.
$$
Subtracting from the Equation \ref{eq:rel1} we obtain $v_2-\overline{v_2}=0$, therefore $v_2=0$ and also $v_3=0$. It is now clear that $\langle\langle\zeta_2,\zeta_3\rangle\rangle\in\R$, thus
$$
\X_1=\frac{\|\zeta_2\|^2}{\|\zeta_2-\zeta_3\|^2},\quad \X_2=\frac{\|\zeta_3\|^2}{\|\zeta_2-\zeta_3\|^2}.
$$
From $\X_1^{1/2}-\X_2^{1/2}=1$ we have $\|\zeta_2\|-\|\zeta_3\|=\|\zeta_2-\zeta_3\|$, therefore there exists a positive $\lambda>1$ such that $\zeta_2=\lambda\;\zeta_3$ from where it follows that the quadruple $\fp$ lies in the same $\R-$circle. Moreover, $p_1$ and $p_3$ separate $p_2$ and $p_4$.

\medskip

Conversely, if all points lie on an $\R$-circle, we may suppose that $p_1$ and $p_3$ separate $p_2$ and $p_4$ and conjugate so that
$$
p_1=\infty,\quad p_2=(\lambda,0,\dots 0,0),\quad p_3=(1,0,\dots,0,0),\quad p_4=o,
$$ 
%\begin{equation*}\label{eq-plifts-circ}
%{\bf p}_1=\left[\begin{matrix} 1 \\ 0 \\ \vdots\\0\\ 0 \end{matrix}\right],\quad
%{\bf p}_2=\left[\begin{matrix} -\lambda^2 \\ \sqrt{2}\lambda\\0\\ \vdots\\ 0 \\ 1 \end{matrix}\right],\quad
%{\bf p}_3=\left[\begin{matrix} -1 \\ \sqrt{2}\\0\\ \vdots\\ 0 \\ 1 \end{matrix}\right],\quad
%{\bf p}_4=\left[\begin{matrix} 0 \\ 0\\ \vdots \\0 \\ 1 \end{matrix}\right],
%\end{equation*}
%\begin{equation*}\label{eq-plifts-circ}
%{\bf p}_1=\left[\begin{matrix} 1 \\ 0 \\ \vdots\\0\\ 0 \end{matrix}\right],\quad
%{\bf p}_2=\left[\begin{matrix} -x^2 \\ \sqrt{2}x\\0\\ \vdots\\ 0 \\ 1 \end{matrix}\right],\quad
%{\bf p}_3=\left[\begin{matrix} -y^2 \\ \sqrt{2}y\\0\\ \vdots\\ 0 \\ 1 \end{matrix}\right],\quad
%{\bf p}_4=\left[\begin{matrix} 0 \\ 0\\ \vdots \\0 \\ 1 \end{matrix}\right],
%$\end{equation*}
where %$x,y\in\R_*$ %${\bf x}=(x_1,\dots,x_{n-1})$ and ${\bf y}=(y_1,\dots,y_{n-1})$ are vectors in $\R^{n-1}$
%and also, there is a 
$\lambda\in\R$, $\lambda>1$. %such that $x=\lambda\;y$. 
% We calculate
%\begin{eqnarray*}
%&&
%\langle\bp_1,\bp_2\rangle=\langle\bp_1,\bp_3\rangle=\langle\bp_1,\bp_4\rangle=1,\\
%&&
%\langle\bp_2,\bp_3\rangle=-(\lambda-1)^2\;y^2,\quad \langle\bp_2,\bp_4\rangle=-\lambda^2\;y^2,\\
%&&
%\langle\bp_3,\bp_4\rangle=-y^2.
%\end{eqnarray*}
Then $\X_1=\lambda^2/(\lambda-1)^2$ and $\X_2=1/(\lambda-1)^2$ from where we obtain
$$
\X_1^{1/2}-\X_2^{1/2}=1.
$$
Rearranging the points, we obtain in a similar manner the other two statements of the Theorem.
\end{proof}

\medskip

Let now $\fp=(p_1,p_2,p_3,p_4)$ a quadruple of distinct points in the  group $\fH_\K$. Inequalities \ref{eq:ptolemaean-inequality-cr} of Theorem \ref{thm:ptolemaean-ineq} can be written as
\begin{eqnarray}\label{eq:ineq-heis1}
&&
d_\fH(p_2,p_3)\cdot d_\fH(p_1,p_4)\le d_\fH(p_2,p_4)\cdot d_\fH(p_1,p_3)+d_\fH(p_1,p_2)\cdot d_\fH(p_3,p_4),\\
&&\label{eq:ineq-heis2}
d_\fH(p_1,p_3)\cdot d_\fH(p_2,p_4)\le d_\fH(p_1,p_2)\cdot d_\fH(p_3,p_4)+d_\fH(p_2,p_3)\cdot d_\fH(p_1,p_4),\\
&&\label{eq:ineq-heis3}
d_\fH(p_1,p_2)\cdot d_\fH(p_3,p_4)\le d_\fH(p_1,p_3)\cdot d_\fH(p_2,p_4)+d_\fH(p_2,p_3)\cdot d_\fH(p_1,p_4).
\end{eqnarray}

\medskip

As a corollary of the above discussion we derive the following.%that the Kor\'anyi--Cygan metric on the Heisenberg group satisfies the Ptolemaean inequality: 

\medskip

\begin{thm}\label{thm:ptolemaean-heis} {\bf (Ptolemaean inequality and  Ptolemaeus' Theorem in the group $\fH_\K$)}
  The  metric $d_\fH$ in the group $\fH_\K$, $\K=\R,\C,\mH$, satisfies the Ptolemaean inequality: for each quadruple of points $\fp=(p_1,p_2,p_3,p_4)$, inequalities \ref{eq:ineq-heis1}, \ref{eq:ineq-heis2} and \ref{eq:ineq-heis3} hold. Moreover, each of these inequalities hold as an equality if and only if all points lie in an $\R-$circle. Explicitly,
\begin{enumerate}
\item $d_\fH(p_2,p_3)\cdot d_\fH(p_1,p_4)=d_\fH(p_2,p_4)\cdot d_\fH(p_1,p_3)+d_\fH(p_1,p_2)\cdot d_\fH(p_3,p_4)$ if and only if all points lie in an $\R-$circle and $p_1$ and $p_4$ separate $p_2$ and $p_3$;
\item $d_\fH(p_1,p_3)\cdot d_\fH(p_2,p_4)=d_\fH(p_1,p_2)\cdot d_\fH(p_3,p_4)+d_\fH(p_2,p_3)\cdot d_\fH(p_1,p_4)$ if and only if all points lie in an $\R-$circle and $p_1$ and $p_3$ separate $p_2$ and $p_4$;
\item $d_\fH(p_1,p_2)\cdot d_\fH(p_3,p_4)=d_\fH(p_1,p_3)\cdot d_\fH(p_2,p_4)+d_\fH(p_2,p_3)\cdot d_\fH(p_1,p_4)$ if and only if all points lie in an $\R-$circle and $p_1$ and $p_2$ separate $p_3$ and $p_4$.
\end{enumerate}
\end{thm}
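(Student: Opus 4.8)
The plan is to obtain the statement purely by translating the cross--ratio inequalities of Theorem~\ref{thm:ptolemaean-ineq} and the equality analysis of Theorem~\ref{thm:ptolemaean-eq} into metric statements, using the identity
$$
|\X(p_1,p_2,p_3,p_4)|^{1/2}=\frac{d_\fH(p_4,p_2)\cdot d_\fH(p_3,p_1)}{d_\fH(p_4,p_1)\cdot d_\fH(p_3,p_2)}
$$
recorded before Proposition~\ref{prop-symmetries}. Since $\fp$ consists of finite points of $\fH_\K$, every distance appearing is an honest value of $d_\fH$, and the denominators above are strictly positive, so no degenerate cases arise and the passage between scalar and metric form is reversible.

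First I would write out $|\X_1|^{1/2}$ and $|\X_2|^{1/2}$ explicitly from that identity: taking $\X_1=[p_1,p_2,p_3,p_4]$ gives the quotient displayed above, while $\X_2=[p_1,p_3,p_2,p_4]$ gives the same expression with the roles of $p_2$ and $p_3$ interchanged. Both quotients share the common positive denominator $d_\fH(p_1,p_4)\cdot d_\fH(p_2,p_3)$, using the symmetry $d_\fH(p,q)=d_\fH(q,p)$. Clearing this denominator turns the inequality $|\X_1|^{1/2}+|\X_2|^{1/2}\ge 1$ of \ref{eq:ptolemaean-inequality-cr} into \ref{eq:ineq-heis1}; the inequality $|\X_1|^{1/2}-|\X_2|^{1/2}\le 1$ becomes \ref{eq:ineq-heis2}; and $|\X_2|^{1/2}-|\X_1|^{1/2}\le 1$ becomes \ref{eq:ineq-heis3}. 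Thus the three metric Ptolemaean inequalities are precisely the three scalar inequalities of Theorem~\ref{thm:ptolemaean-ineq} multiplied through by $d_\fH(p_1,p_4)\cdot d_\fH(p_2,p_3)$.

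For the equality statements I would run the same translation on Theorem~\ref{thm:ptolemaean-eq}. Equality in \ref{eq:ineq-heis1} is equality in $|\X_1|^{1/2}+|\X_2|^{1/2}=1$, which by case~(3) of that theorem holds exactly when the four points lie on an $\R$--circle and $p_1,p_4$ separate $p_2,p_3$; this is assertion~(1). Likewise equality in \ref{eq:ineq-heis2} is $\X_1^{1/2}-\X_2^{1/2}=1$, case~(1), with $p_1,p_3$ separating $p_2,p_4$, giving assertion~(2); and equality in \ref{eq:ineq-heis3} is $\X_2^{1/2}-\X_1^{1/2}=1$, case~(2), with $p_1,p_2$ separating $p_3,p_4$, giving assertion~(3). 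In each case Theorem~\ref{thm:ptolemaean-eq} already guarantees $\X_i>0$, so the square roots are unambiguous and the absolute values may be dropped.

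There is no genuine analytic obstacle here: the hard inequality \ref{eq-variety2} and its equality analysis have already been carried out in Proposition~\ref{prop-norm}, Proposition~\ref{prop-norm-eq}, and Theorems~\ref{thm:ptolemaean-ineq} and~\ref{thm:ptolemaean-eq}. The only point demanding care is the bookkeeping of the correspondence between the three signed combinations $|\X_1|^{1/2}\pm|\X_2|^{1/2}$, the three metric inequalities, and the three separation patterns; I would check each matching directly from the modulus identity rather than by appeal to symmetry, so as to be certain that the separation condition attached to each equality is the correct one.
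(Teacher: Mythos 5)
Your proposal is correct and takes essentially the same route as the paper, which derives Theorem \ref{thm:ptolemaean-heis} ``as a corollary of the above discussion'': it rewrites the inequalities \ref{eq:ptolemaean-inequality-cr} of Theorem \ref{thm:ptolemaean-ineq} in the metric forms \ref{eq:ineq-heis1}--\ref{eq:ineq-heis3} via the identity $|\X|^{1/2}=\frac{d_\fH(p_4,p_2)\,d_\fH(p_3,p_1)}{d_\fH(p_4,p_1)\,d_\fH(p_3,p_2)}$ (valid with all denominators positive since the points are finite and distinct) and then invokes the equality analysis of Theorem \ref{thm:ptolemaean-eq}. Your bookkeeping matching $|\X_1|^{1/2}+|\X_2|^{1/2}=1$, $|\X_1|^{1/2}-|\X_2|^{1/2}=1$ and $|\X_2|^{1/2}-|\X_1|^{1/2}=1$ to the three separation patterns agrees exactly with items (1)--(3) of the statement.
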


\section{Preliminaries for the Case $\K=\mO$}\label{sec:prelO}

The material in this section is from \cite{MP} and the papers referenced therein; the treatment of octonionic hyperbolic plane follows closely that given in the paper of Allcock, \cite{A}. For reasons of consistency  we have kept the notation of \cite{MP} almost intact. In Section \ref{sec:oct} we describe briefly the set of octonions; the construction of octonionic hyperbolic plane  $\bH^2_\mO$ follows in Section \ref{sec:oct-hyp}. The boundary $\partial\bH^2_\mO$, the analogue of the octonionic Heisenberg group and the metric $d_\fH$, as well as the action of the group of isometries of $\bH^2_\mO$ in the boundary are described in Section \ref{sec:bound-O}. Finally, $\R-$circles on the boundary are described in Section \ref{sec:R-sirc-O}.

\subsection{Octonions}\label{sec:oct}
The set of octonions is the 8--dimensional real vector space with basis $\be_0=1$ and $\be_i$, $i=1,\dots,7$ together with a non associative multiplication defined in the basis vectors by the following rules.
\begin{enumerate}
\item[{(i)}]
$
\be_0\be_i=\be_i\be_0=\be_i$, $\be_i^2=-1$, $i=1,\dots,7;
$ 
\item[{(ii)}]
$
\be_i\be_j = -\delta_{ij}\be_0 + \varepsilon _{ijk} \be_k, 
$
where $\delta_{ij}$ is Kronecker's delta tensor and $\varepsilon _{ijk}$ is a completely antisymmetric tensor with value +1 when $ijk = 124$, $137$, $156$, $235$, $267$, $346$, $457$.
\end{enumerate}
Multiplication is extended everywhere in $\mO$ by linearity. 

We write an octonion $z$ as
$
z=z_0+\sum_{i=1}^7z_i\be_i 
$.
Its  conjugate is defined to be
$
\overline{z}=z_0-\sum_{i=1}^7z_i\be_i 
$ and for any two octonions $z$ and $w$ we have $\overline{zw}=\overline{w}\;\overline{z}$. 
The real part of an octonion $z$ is $\Re(z)=(z+\overline{z})/2=z_0$ whether its imaginary part is $\Im(z)=(z-\overline{z})/2=\sum_{i=1}^7z_i\be_i$. 
The modulus $|z|$ of an octonion is the non--negative real number defined by $|z|^2=z\overline{z}=\overline{z}z=\sum_{i=0}^7z_i^2$. For any two octonions $z$ and $w$ we have $|zw|=|z|\;|w|$ and $|z|=0$ if and only if $z=0$. The inverse of an non zero octonion $z$ is the octonion $z^{-1}=\overline{z}/|z|^2$. Clearly, $z\;z^{-1}=z^{-1}\;z=1$. A unit octonion is an octonion $\mu$ with $|\mu|=1$. The inverse of a unit octonion $\mu$ is its conjugate $\overline{\mu}$. When a unit octonion $\mu$ is purely imaginary, its inverse is its opposite $-\mu$.

\medskip

The following Proposition is found in Propositions 3.1 and 3.2 of \cite{MP}.

\medskip

\begin{prop}
\begin{enumerate}
\item For any octonions $x$ and $y$  the subalgebra with a unit generated by 
$x$  and $y$  is  associative.    In  particular,  any  product  of  octonions  that  may  be  written 
in terms of just two octonions is associative. 
\item Suppose  that  $x$,  $y$,  $z$ are  octonions  and $\mu$  is  an  imaginary  unit octonion. Then 
\begin{eqnarray*}
&&
                           z(xy)z    =   (zx)(yz),\\ 
                                                    &&              
                            \Re((xy)z)    =     \Re(x(yz))     =     \Re((yz)x), \\                    
&&
                        (\mu x\overline{\mu})(\mu y)    =   \mu(xy),\\                                         
&&
                        (x\mu)(\overline{\mu}y\mu)    =   (xy)\mu,\\                                         
&&
                          xy + yx    =   (x\overline{\mu})(\mu y) + (y\overline{\mu})(\mu x).                          
\end{eqnarray*}
Any of the three expressions in the second relation is denoted by $\Re(xyz)$.
\end{enumerate}
\end{prop}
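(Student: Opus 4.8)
The plan is to deduce every assertion from the single structural fact that $\mO$ is an \emph{alternative} algebra, i.e.\ that the associator $[x,y,z]=(xy)z-x(yz)$ is an alternating trilinear function of $x,y,z$. This is visible directly from rules (i)--(ii): since $\be_i^2=-1$ and any product of two octonions is associative, the identities $[x,x,y]=0$ and $[x,y,y]=0$ hold, and polarizing them shows $[\cdot,\cdot,\cdot]$ changes sign under the interchange of any two of its slots, hence is totally antisymmetric. Granting alternativity, part (1) is exactly \emph{Artin's theorem}: in an alternative algebra the subalgebra generated by any two elements is associative. The ``in particular'' clause is just the remark that a product written with only two distinct octonions lies in such a subalgebra and therefore associates.

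For part (2) I would first record the three \emph{Moufang identities}, the standard companions of alternativity (obtained from the linearized alternative laws, or from Artin's theorem): the left identity $(\mu x\mu)z=\mu\bigl(x(\mu z)\bigr)$, the right identity $z(\mu y\mu)=\bigl((z\mu)y\bigr)\mu$, and the middle identity $z(xy)z=(zx)(yz)$. The first displayed relation, $z(xy)z=(zx)(yz)$, \emph{is} the middle identity. The third and fourth reduce to the left and right identities using $\overline{\mu}=-\mu$, $\mu^2=-1$, $\mu\overline{\mu}=1$: e.g.\ $(\mu x\overline{\mu})(\mu y)=-(\mu x\mu)(\mu y)$, and the left identity with $z=\mu y$ together with $\mu(\mu y)=-y$ gives $(\mu x\mu)(\mu y)=-\mu(xy)$, whence $(\mu x\overline{\mu})(\mu y)=\mu(xy)$; symmetrically the right identity and $(x\mu)\mu=-x$ yield $(x\mu)(\overline{\mu}y\mu)=(xy)\mu$.

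The real--part relations are treated by hand. The equality $\Re(x(yz))=\Re((yz)x)$ is the trace symmetry $\Re(ab)=\Re(ba)$, valid for all octonions because $\Re(ab)=\Re(a)\Re(b)-\langle\Im a,\Im b\rangle$ is symmetric, applied with $a=x$ and $b=yz$. The equality $\Re((xy)z)=\Re(x(yz))$ asserts that the associator is purely imaginary; by trilinearity it is enough to check it on imaginary basis vectors (if any argument is real both sides agree, since scalars associate), and there rules (i)--(ii) make both sides equal to $-\varepsilon_{ijk}$.

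The last identity $xy+yx=(x\overline{\mu})(\mu y)+(y\overline{\mu})(\mu x)$ looks like the hardest, yet it drops out by polarizing the middle identity. Replacing $z$ by $z+w$ in $z(pq)z=(zp)(qz)$ and cancelling the two pure terms gives
\[
\bigl(z(pq)\bigr)w+\bigl(w(pq)\bigr)z=(zp)(qw)+(wp)(qz).
\]
Taking $p=\overline{\mu}$, $q=\mu$, $z=x$, $w=y$ and using $\overline{\mu}\mu=1$ collapses the left side to $xy+yx$ and produces precisely the desired right side. The genuine effort in the whole Proposition therefore sits in the foundations---verifying alternativity and extracting the Moufang identities from it; this bookkeeping is the main obstacle, and once it is done (or cited from the standard octonion literature) each displayed relation is a one- or two-line substitution.
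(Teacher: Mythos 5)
The paper never proves this proposition: it is quoted verbatim from Propositions 3.1 and 3.2 of \cite{MP}, so there is no internal argument to compare against, and your proposal is in effect supplying the standard literature proof. Its architecture is right: alternativity of $\mO$, Artin's theorem for part (1), the three Moufang identities for the multiplicative relations, the symmetry $\Re(ab)=\Re(ba)$ together with pure imaginarity of the associator for the trace relations, and polarization of the middle Moufang identity for the last line. I checked the individual steps: $(\mu x\overline{\mu})(\mu y)=-(\mu x\mu)(\mu y)=-\mu\bigl(x(\mu(\mu y))\bigr)=\mu(xy)$ and its mirror for $(x\mu)(\overline{\mu}y\mu)=(xy)\mu$ are correct; the basis computation giving $\Re((\be_i\be_j)\be_k)=\Re(\be_i(\be_j\be_k))=-\varepsilon_{ijk}$ is correct, using the complete antisymmetry of $\varepsilon$; and the linearized middle identity $(zp)(qw)+(wp)(qz)=\bigl(z(pq)\bigr)w+\bigl(w(pq)\bigr)z$ with $p=\overline{\mu}$, $q=\mu$, $\overline{\mu}\mu=1$ does collapse to $xy+yx=(x\overline{\mu})(\mu y)+(y\overline{\mu})(\mu x)$; indeed your argument proves that identity for \emph{every} unit octonion $\mu$, imaginarity being unnecessary there.

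Two blemishes sit exactly at the foundations you flag as the real work. First, your justification of alternativity is circular as written: you invoke ``any product of two octonions is associative'' to obtain $[x,x,y]=[x,y,y]=0$, but that associativity statement is precisely the conclusion of part (1), which you derive from Artin's theorem, which in turn requires alternativity. The repair is to verify $[x,x,y]=0$ directly: by trilinearity it reduces to $[\be_i,\be_j,\be_k]+[\be_j,\be_i,\be_k]=0$ on basis triples, a finite check against the $\varepsilon_{ijk}$ table (pairs of imaginary units generate quaternion subalgebras along Fano lines, which settles the repeated-index cases, and the antisymmetry of $\varepsilon$ plus the table settles the rest) --- or simply cite alternativity from the standard octonion literature, which is what the paper itself in effect does by deferring to \cite{MP}. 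Second, the Moufang identities do not follow ``from Artin's theorem'': they involve three independent variables, so no two-generated subalgebra contains all their terms; they follow from the linearized alternative laws, the other source you name, and that is the derivation you should keep.
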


\subsection{Octonionic hyperbolic plane}\label{sec:oct-hyp}
Let 
$$
J=\left[\begin{array}{ccc}
0 & 0 & 1\\
0 & 1 & 0\\
1 & 0 & 0\end{array}\right],
$$
and let also ${\rm M}(3,\mO)$ be the real vector space of $3\times3$ matrices with octonionic entries. 
Let $X^*$  denote the conjugate transpose of a matrix $X$  in ${\rm M}(3,\mO)$.  Define 
$$
                          \cJ =\{ X\in {\rm M}(3,\mO) \;|\;   J X  = X^*J\}. 
$$
Then $\cJ$  is closed under the Jordan multiplication 
$$ 
X * Y   =\frac{1}{2}   (XY  + Y X), 
$$
and  $\cJ$ will thus be called  the  Jordan algebra associated to $J$.  The set of real numbers act on ${\rm M}(3,\mO)$ 
by  multiplication  of  each  entry  of  $X$. An  equivalence  relation  is defined on  $\cJ$  by
$$ 
X\sim Y\quad  \text{if and only if}\quad Y  = kX\;\;\text{for some non-zero real number}\; k. 
$$
Then we define $\mP\cJ$ to be the set of equivalence classes $[X]$.
We also define
$$
\mO^3_0
=\left\{\bv =\left(\begin{matrix}
x\\
y\\
z\\
\end{matrix}\right)\;\;|\;\; x, y, z \;\text{all lie in some associative subalgebra of}\; \mO\right\}.
$$
An equivalence relation is defined on $\mO^3_0$  by $\bv\sim\bw$ if $\bw = \bv a$ for some $a$ in an
associative subalgebra of $\mO$ containing the entries $x$, $y$, $z$ of $\bv$. Let $\mP\mO^3_0$ be the set
of equivalence classes $[\bv]$. Define a map $\pi_J : \mO^3_0\to \cJ$ by
$$
\pi_J(\bv) = \bv\bv^* J=
\left(\begin{matrix}
x\overline{z}& x\overline{y} &|x|^2\\
y\overline{z}& |y|^2 & y\overline{x}\\
|z|^2 & z\overline{y}& z\overline{x}
\end{matrix}\right).
$$
One can easily check that if $x$, $y$, $z$, $a$ all lie in an associative subalgebra of $\mO$ then
$$
\pi_J\left(\begin{matrix}
xa\\
ya\\
za
\end{matrix}\right)= |a|^2\pi_J
\left(\begin{matrix}
x\\
y\\
z\\
\end{matrix}\right).
$$
Therefore the map $\pi_J : \mP\mO^3_0\to \mP\cJ$ given by $\pi_J[\bv] =\left[\pi_J(\bv)\right]$ is well defined.
An (indefinite) norm $|\bv|_J$ is given  on $\mO^3_0$ by
$$
|\bv|_J = \bv^*J\bv ={\rm tr}\left(\pi_J(\bv)\right).
$$
The octonionic hyperbolic plane $\bH^2_\mO$ is the subset of $\mP\cJ$ comprising
$\left[\pi_J(\bv)\right]=
\pi_J[\bv]$ for $\bv\in \mO^3_0$ with $|\bv|_J < 0$. 

An invariant metric can be defined in $\bH^2_\mO$, see \cite{M} p.139. The isometry group of this metric is ${\rm Aut}(\cJ)$ which is known to be $F_{4(-20)}$, an exceptional connected 52--dimensional Lie group (see \cite{A}).

\subsection{The boundary and the variety $\fH^{15}$}\label{sec:bound-O}
The boundary of $\bH^2_\mO$ is the subset of $\mP\cJ$ comprising of $\left[\pi_J(\bv)\right]$
for $\bv\in \mO^3_0$ with $|\bv|_J < 0$. Below we present a more familiar representation of the boundary of the octonionic hyperbolic plane.
We consider the following 15--real dimensional subvariety of $\mO\times\mO$:
\begin{equation}\label{eq:H15}
\fH^{15}=\left\{(x,y)\in\mO\times\mO\;|\;2\Re(x)+|y|^2=0\right\},
\end{equation}
and we denote by $\overline{\fH^{15}}$ the set $\fH^{15}\cup\{\infty\}$. Consider the map $\psi : \overline{\fH^{15}}\to \cJ$ given by
$$
\psi(x, y) = \pi_J
\left(\begin{matrix}
x\\
y\\
1
\end{matrix}\right)=
\left(\begin{matrix}
x & x\overline{y} &|x|^2\\
y &|y|^2 & y\overline{x}\\
1& \overline{y} &\overline{x}
\end{matrix}\right),\quad
\psi(\infty) = \pi_J
\left(\begin{matrix}
1\\
0\\
0\\
\end{matrix}\right) =
\left(\begin{matrix}
0 & 0 & 1\\
0 & 0 & 0\\
0 & 0 & 0\\   
\end{matrix}\right).
$$
The map $\mP\psi: \overline{\fH^{15}}\to\partial\bH^2_\mO\subset\mP\cJ$ is a bijection between $\overline{\fH^{15}}$
and  the boundary of the octonionic hyperbolic plane which from now on will be identified to $\overline{\fH^{15}}$. 
We consider the following self transformations of $\overline{\fH^{15}}$:
\begin{enumerate}
\item left translations: for given $(t,s)\in\fH^{15}$ define
\begin{equation*}\label{eq:oct-translations}
T_{(t,s)}(x,y)=(t,s)*(x,y)=(t+x-\overline{s}y,s+y),\quad T_{(t,s)}(\infty)=\infty;
\end{equation*}
\item transformations $S_\mu$: for given unit imaginary octonion $\mu$ define
\begin{equation*}\label{eq:oct-rotations}
S_{\mu}(x,y)=(\mu x\overline{\mu}, y\overline{\mu}),\quad S_{\mu}(\infty)=\infty;
\end{equation*}
\item dilations: for given positive number $\delta$ define
\begin{equation*}\label{eq:oct-dilations}
D_{\delta}(x,y)=(\delta^4x,\delta^2y),\quad D_{\delta}(\infty)=\infty
\end{equation*}
and finally
\item inversion: for $x\neq 0$ define
\begin{equation*}\label{eq:oct-inversion}
R(x,y)=\left(\frac{\overline{x}}{|x|^2},-\frac{y\overline{x}}{|x|^2}\right),\quad R(\infty)=(0,0),\quad R(0,0)=\infty.
\end{equation*}
\end{enumerate}
We remark that in general $S_\mu\circ S_\nu\neq S_{\mu\nu}$ for $\mu,\nu$ unit imaginary octonions. The group generated by transformations $S_\mu$ is the compact group ${\rm Spin}_7(\R)$.

\medskip

Let $G_\mO$ be the group generated  by $R$, $S_\mu$ and $T_{(t,s)}$ for all unit imaginary octonions $\mu$ and all $(t,s)\in\fH^{15}$. It is proved (see \cite{A} and also \cite{MP}, minding that the first author does not contain dilations into $G_\mO$) that $G_\mO$ is isomorphic to ${\rm Aut}(\cJ)$, i.e. to $F_{4(-20)}$. We mention that the stabiliser of $o$ and $\infty$ in $G_\mO$ comprises of dilations and transformations $S_\mu$ and thus is isomorphic to $\R_+\times{\rm Spin}_7(\R)$. 

\medskip

\begin{prop}
Let $p=(x,y)$ and $q=(w,z)$ be any two distinct points in $\overline{\fH^{15}}$. Then there is an element of $G_\mO$ sending $p$ to $\infty$ and $q$ to $o=(0,0)$. In particular, the group $G_\mO$ acts doubly transitively in $\overline{\fH^{15}}$
\end{prop}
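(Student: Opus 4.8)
The plan is to exploit the structure already present among the listed transformations: the left translations $T_{(t,s)}$ act transitively on the finite part $\fH^{15}$ while fixing $\infty$, and the inversion $R$ interchanges $o=(0,0)$ and $\infty$. First I would verify that $*$ makes the finite points into a group with identity $o$, in which the inverse of $(x,y)$ is $(\overline{x},-y)$; this lies in $\fH^{15}$ since $2\Re(\overline{x})+|-y|^2=2\Re(x)+|y|^2=0$, and a one-line computation using $\overline{x}+x=2\Re(x)=-|y|^2$ confirms $(\overline{x},-y)*(x,y)=o$. A direct check also gives $T_{(t,s)}\circ T_{(t',s')}=T_{(t,s)*(t',s')}$, so the translations form a group acting simply transitively on $\fH^{15}$; in particular $T_{(x,y)^{-1}}$ carries the finite point $(x,y)$ to $o$ and fixes $\infty$.

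Granting this, I would finish by a short case analysis. If $p=\infty$, then $T_{q^{-1}}$ fixes $\infty$ and sends the finite point $q$ to $o$. If $q=\infty$, then $R\circ T_{p^{-1}}$ sends $p\mapsto o\mapsto\infty$ and $\infty\mapsto\infty\mapsto o$, as wanted. If both $p$ and $q$ are finite, I would first apply $T_{p^{-1}}$ to move $p$ to $o$ and $q$ to a finite point $q'=p^{-1}*q\neq o$; then $R$ sends $o\mapsto\infty$ and carries the finite nonzero point $q'$ to the finite point $R(q')$; and a final translation $T_{R(q')^{-1}}$ fixes $\infty$ and sends $R(q')$ to $o$. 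In every case the composite belongs to $G_\mO$ and sends $p$ to $\infty$ and $q$ to $o$, which is exactly double transitivity.

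The one genuinely delicate point is the non-associativity of octonionic multiplication, which a priori could spoil the identities above, most notably $T_{(t,s)}\circ T_{(t',s')}=T_{(t,s)*(t',s')}$. Here I expect the saving observation to be that every octonionic product appearing in the group law, in the formula for the inverse, and in the verification that $R(q')$ remains finite involves at most two of the octonionic variables at a time; by the recorded fact that any two octonions generate an associative subalgebra, all of these expressions may be rearranged exactly as in an associative algebra. Thus the apparent obstacle disappears and the remaining work is routine bookkeeping with the explicit formulas for $T_{(t,s)}$ and $R$.
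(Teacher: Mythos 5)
Your argument is correct and complete. Note first that the paper itself gives no proof of this proposition: Section \ref{sec:bound-O} is imported wholesale from \cite{MP} (and ultimately \cite{A}), and the statement is recorded there without argument, so there is no in-paper proof to compare against. Your decomposition is exactly the standard one those references rely on: translations act simply transitively on the finite part while fixing $\infty$, the inversion $R$ swaps $o$ and $\infty$, and the general case is handled by the composite $T_{R(q')^{-1}}\circ R\circ T_{p^{-1}}$. Your checks go through as stated: closure of $\fH^{15}$ under $*$ follows from $|s+y|^2=|s|^2+|y|^2+2\Re(\overline{s}y)$, the inverse $(\overline{x},-y)$ works because $x+\overline{x}+|y|^2=2\Re(x)+|y|^2=0$, and the identity $T_{(t,s)}\circ T_{(t',s')}=T_{(t,s)*(t',s')}$ reduces to the equality of $t+t'+x-\overline{s}s'-\overline{s}y-\overline{s'}y$ computed both ways, in which every octonionic product involves only two variables at a time --- so, as you correctly anticipated, non-associativity never enters (indeed here one does not even need the two-generator associativity lemma, since no triple products occur).

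One small point you should make explicit rather than leave implicit: the formula for $R$ is only given for $x\neq 0$, and your intermediate point $q'=p^{-1}*q$ is merely known to be finite and distinct from $o$. You need that $q'\neq o$ forces its first coordinate to be nonzero, and this is where the defining equation of $\fH^{15}$ is used: if $q'=(x',y')$ has $x'=0$, then $|y'|^2=-2\Re(x')=0$, so $y'=0$ and $q'=o$. With that one line added, every step is justified and the double transitivity follows exactly as you say.
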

%\begin{proof}
%Consider first the translation $T_{(\overline{x},-y)}$. Then
%$$
%T_{(\overline{x},-y)}(x,y)=o,\quad \text{and}\quad T_{(\overline{x},-y)}(w,z)=(\overline{x}+w+\overline{y}z,-y+z).
%$$
%Applying the inversion $R$ we get
%$$
%\left(R\circ T_{(\overline{x},-y)}\right)(p_1)=\infty,\quad \left(R\circ T_{(\overline{x},-y)}\right)(p_2)=
%$$
%\end{proof}

\medskip

We define a metric $d_{\fH^{15}}$ in $\fH^{15}$ as follows. For $(x,y),(w,z)\in\fH^{15}$ we set
\begin{equation}
d_{\fH^{15}}\left((x,y),(w,z)\right)=|x+\overline{w}+\overline{z}y|^{1/2}.
\end{equation}

\medskip

The proof of the statements of the next proposition is found in \cite{MP}.

\medskip

\begin{prop}\label{prop:dO-properties}
$d_{\fH^{15}}$ is a metric in $\fH^{15}$ with the following properties.
\begin{enumerate}
\item It is invariant by left translations and rotations: for each $(t,s)\in\fH^{15}$ and for each unit imaginary octonion $\mu$ we have
\begin{eqnarray*}
&&
d_{\fH^{15}}\left(T_{(t,s)}(x,y),T_{(t,s)}(w,z)\right)=d_{\fH^{15}}\left((x,y),(w,z)\right),\\
&&
d_{\fH^{15}}\left(S_\mu(x,y),S_\mu(w,z)\right)=d_{\fH^{15}}\left((x,y),(w,z)\right).
\end{eqnarray*}
\item It is scaled by a factor $\delta^2$ by dilations $D_\delta$:
\begin{equation*}
d_{\fH^{15}}\left(D_\delta(x,y),D_\delta(w,z)\right)=\delta^2d_{\fH^{15}}\left((x,y),(w,z)\right).
\end{equation*}
\item For all $(x,y),(w,z)\in\fH^{15}\setminus\{o\}$ we have
\begin{eqnarray*}
&&
d_{\fH^{15}}\left(R(x,y),o\right)=\frac{1}{d_{\fH^{15}}\left((x,y),o\right)},\\
&&
d_{\fH^{15}}\left(R(x,y),R(w,z)\right)=\frac{d_{\fH^{15}}\left((x,y),(w,z)\right)}{d_{\fH^{15}}\left((x,y),o\right)d_{\fH^{15}}\left(o,(w,z)\right)}.
\end{eqnarray*}
\end{enumerate}
\end{prop}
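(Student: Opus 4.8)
The plan is to reduce every assertion to the single algebraic fact that $d_{\fH^{15}}$ is the modulus of the octonionic Hermitian pairing underlying $\cJ$, and then to dispatch each property by an identity from the preceding Proposition together with the multiplicativity $|ab|=|a|\,|b|$ of the octonionic modulus. First I would take the standard lifts $\bv,\bw\in\mO^3_0$ of $(x,y)$ and $(w,z)$, i.e.\ the columns with entries $x,y,1$ and $w,z,1$, and check directly that $\bw^*J\bv=x+\overline{z}y+\overline{w}$, so that $d_{\fH^{15}}\bigl((x,y),(w,z)\bigr)=\bigl|\bw^*J\bv\bigr|^{1/2}$. Writing $S:=x+\overline{w}+\overline{z}y$, symmetry is then immediate from $\overline{S}=\overline{x}+w+\overline{y}z=\bv^*J\bw$ and $|a|=|\overline{a}|$, while non-degeneracy follows from the computation $2\Re(S)=-|y-z|^2$, valid because $2\Re(x)+|y|^2=0$ and $2\Re(w)+|z|^2=0$: the pairing vanishes only when $y=z$, and this in turn forces $x=w$. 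The triangle inequality is the one genuinely subtle metric axiom; I would obtain it as in \cite{MP}, using the invariance properties below to normalise to a single configuration.

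\textbf{Translations and dilations.} For $T_{(t,s)}$ I would substitute $X=t+x-\overline{s}y$, $Y=s+y$ and the analogous $W,Z$ into $|X+\overline{W}+\overline{Z}Y|$. Expanding $\overline{Z}Y=(\overline{s}+\overline{z})(s+y)$ and collecting terms, the cross terms $\mp\overline{s}y$ and $\mp\overline{z}s$ cancel purely additively, so no associativity is involved, and one is left with $t+\overline{t}+|s|^2+S$; the prefix $2\Re(t)+|s|^2$ vanishes by the defining relation of $\fH^{15}$, giving $X+\overline{W}+\overline{Z}Y=S$ and hence invariance. Dilations are immediate, since $D_\delta$ multiplies the pairing by $\delta^4$ and the square root turns this into the factor $\delta^2$.

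\textbf{Rotations.} Here the non-associativity first intervenes and the octonionic identities become essential. With $X=\mu x\overline{\mu}$, $Y=y\overline{\mu}$ and likewise for $W,Z$, one gets $X+\overline{W}=\mu(x+\overline{w})\overline{\mu}$ directly, while for the remaining term I would apply $z(xy)z=(zx)(yz)$ with $z=\mu$, which gives $(\mu\overline{z})(y\mu)=\mu(\overline{z}y)\mu$, and hence (using $\overline{\mu}=-\mu$) $\overline{Z}Y=(\mu\overline{z})(y\overline{\mu})=\mu(\overline{z}y)\overline{\mu}$. Therefore $X+\overline{W}+\overline{Z}Y=\mu\,S\,\overline{\mu}$, and invariance follows from $|\mu a\overline{\mu}|=|a|$.

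\textbf{Inversion.} This is the step I expect to be the main obstacle. The first formula is a one-line computation: $R(x,y)=(x^{-1},-yx^{-1})$ gives $d_{\fH^{15}}(R(x,y),o)=|x^{-1}|^{1/2}=|x|^{-1/2}=1/d_{\fH^{15}}((x,y),o)$. For the second, the conceptual picture is that $R$ is the coordinate involution $\iota$ exchanging the two null slots (with $\iota^*J\iota=J$) followed by the right renormalisation of the lift, so that $R(x,y)$ and $R(w,z)$ have lifts $(\iota\bv)x^{-1}$ and $(\iota\bw)w^{-1}$. Carrying this through, the relevant pairing equals $\frac{1}{|x|^2|w|^2}E$ with $E=w|x|^2+\overline{x}|w|^2+(w\overline{z})(y\overline{x})$, and everything reduces to the identity $|E|=|x|\,|w|\,|S|$, i.e.\ to $d_{\fH^{15}}(R(x,y),R(w,z))=|S|^{1/2}/(|x|^{1/2}|w|^{1/2})$, which recognises $d_{\fH^{15}}((x,y),o)=|x|^{1/2}$ and $d_{\fH^{15}}(o,(w,z))=|w|^{1/2}$. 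The hard part is precisely that the last summand $(w\overline{z})(y\overline{x})$ is \emph{mis-associated} relative to $S$, so the naive factoring of the scalars $w^{-1}$ and $x^{-1}$ out of the matrix product fails. I would handle this by comparing $E$ with the cleanly-associated $E_0=w\bigl(S\,\overline{x}\bigr)$, whose modulus is visibly $|w|\,|S|\,|x|$: the two differ only in the re-association of that single term, the two versions have the same modulus $|w|\,|z|\,|y|\,|x|$ by multiplicativity, and the interference (real-part) contributions coincide because $\Re(abc)$ is well defined and the Moufang identities hold, whence $|E|=|E_0|$. This last verification is the one genuinely octonion-specific point, and it is carried out explicitly in \cite{MP}.
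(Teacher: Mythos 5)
Your proposal is essentially correct, but note that for this particular proposition the paper offers no argument of its own: it states the result and defers the proof wholesale to \cite{MP} (``The proof of the statements of the next proposition is found in \cite{MP}''), so your computations are in effect a reconstruction of that reference rather than of anything internal to the paper. Your verifications check out: the pairing computation $\bw^*J\bv=x+\overline{w}+\overline{z}y$, symmetry via conjugation, non-degeneracy via $2\Re(S)=-|y-z|^2$, the purely additive cancellation for translations with $2\Re(t)+|s|^2=0$ absorbing the remainder, the rotation step $\overline{Z}Y=(\mu\overline{z})(y\overline{\mu})=\mu(\overline{z}y)\overline{\mu}$ via $z(xy)z=(zx)(yz)$ and $\overline{\mu}=-\mu$, the dilation scaling, the first inversion identity, and the reduction of the second to $|E|=|x|\,|w|\,|S|$ with $E=w|x|^2+\overline{x}|w|^2+(w\overline{z})(y\overline{x})$ are all right. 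Deferring the triangle inequality to \cite{MP} is consistent with the paper, which defers everything.

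Two caveats on the one step you flagged as hard. First, the phrase ``because $\Re(abc)$ is well defined'' is not by itself a justification: real parts of \emph{four}-fold octonionic products are association-dependent in general, since $\Re([a,b,c]\,d)$ need not vanish for the associator $[a,b,c]=(ab)c-a(bc)$. What actually makes your comparison of $E$ with $E_0=w(S\overline{x})$ work is that $|B|=|B_0|=|w|\,|z|\,|y|\,|x|$ for the two differing terms $B=(w\overline{z})(y\overline{x})$, $B_0=w\bigl((\overline{z}y)\overline{x}\bigr)$, while in each cross term against $\overline{A}=\overline{w}|x|^2+|w|^2x$ an adjacent pair collapses to the real scalar $|w|^2$ or $|x|^2$, so the three-fold trace identity $\Re\bigl((uv)t\bigr)=\Re\bigl(u(vt)\bigr)$ together with Artin's two-generator associativity suffices:
\begin{align*}
\Re\bigl(\overline{w}\bigl[(w\overline{z})(y\overline{x})\bigr]\bigr)&=|w|^2\,\Re\bigl(\overline{z}(y\overline{x})\bigr)=\Re\bigl(\overline{w}\bigl[w\bigl((\overline{z}y)\overline{x}\bigr)\bigr]\bigr),\\
\Re\bigl(x\bigl[(w\overline{z})(y\overline{x})\bigr]\bigr)&=|x|^2\,\Re\bigl((w\overline{z})y\bigr)=\Re\bigl(x\bigl[w\bigl((\overline{z}y)\overline{x}\bigr)\bigr]\bigr),
\end{align*}
whence $\Re(\overline{A}B)=\Re(\overline{A}B_0)$ and $|E|=|E_0|=|w|\,|S|\,|x|$; so the verification you deferred to \cite{MP} does close using only the identities quoted in the paper's octonion preliminaries. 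Second, a harmless slip in your conceptual picture: $R(x,y)=(x^{-1},-yx^{-1})$, so the lift $(\iota\bv)x^{-1}$ has the wrong sign in the middle slot; since the second slots of both points enter the cross term $\overline{Z'}Y'$ jointly, the two signs cancel and your $E$ is unaffected.
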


\medskip

It follows that the isometries of $d_{\fH^{15}}$ are left translations $T_{(t,s)}$ and transformations $S_\mu$; the subgroup of $G_\mO$ generated by these transformations will be denoted by ${\rm Isom}(\fH^{15},d_\fH)$ (in the \cite{MP} it is denoted by ${\rm Aut}(\fH^{15})$). We have ${\rm Isom}(\fH^{15},d_\fH)=\fH^{15}\times {\rm Spin}_7(\R)$ and thus it acts transitively on $\fH^{15}$. There is a natural extension of the metric $d_{\fH^{15}}$ in $\overline{\fH^{15}}$ which we shall denote again by $d_{\fH^{15}}$ and is defined by the following rules:
\begin{enumerate}
\item for each $(x,y)\in\fH^{15}$ we set $d_{\fH^{15}}\left((x,y),\infty\right)=\infty$ and
\item $d_{\fH^{15}}\left(\infty,\infty\right)=0$.
\end{enumerate}
It is obvious that the extended metric enjoys the properties of Proposition \ref{prop:dO-properties}.

\medskip

We wish to remark at this pointa that by applying the change of coordinates $(x,y)\to (\zeta,v)$ where
$$
\zeta=y/\sqrt{2},\quad v=\Im(x) 
$$
we can work in the {\it octonionic Heisenberg group} instead of the variety $\fH^{15}$, that is $\mO\times\Im(\mO)$ with group multiplication
$$
(\zeta,v)*(\zeta',v')=\left(\zeta+\zeta',v+v'+2\Im\left(\overline{\zeta}\zeta\right)\right),
$$
see also \cite{A} for a slightly different parametrization. For reasons of consistency with \cite{MP}, we prefered to work in $\fH^{15}$ instead.

\subsubsection{$\R-$circles}\label{sec:R-sirc-O}
Perhaps the easiest (and swiftest) way to define $\R-$circles in the boundary of octonionic hyperbolic plane $\partial\bH^2_\mO$ is to start from the {\it standard $\R-$ circle} $R_\R$ given by
$$
R_\R=\left\{(x,y)\in\fH^{15}\;|\;x=-t^2,\;y=\sqrt{2}t,\;\;t\in\R\right\}.
$$
Then a curve in $\partial\bH^2_\mO$ is an $\R-$circle if and only if it is the image of $R_\R$ under an element of $G_\mO$.

\section{Ptolemaean Inequality and Ptolemaeus' Theorem II}\label{sec:pt2}
In this section we prove our main result for the case where $\K=\mO$. For the proof, we use a slightly different route than the one we used in the case where $\K=\R.\C,\mH$; that is, we do not define an octonionic cross--ratio for a quadruple $\fp$ of distinct points $p_1,p_2,p_3,p_4$ in the boundary. We have mentioned in the introduction that the definition of  a  working octonionic cross--ratio seems to be quite tricky, mainly due to non associativity of the octonionic multiplication.

We circumvent this problem by defining in Section \ref{sec:real-cr-oct} the real cross--ratio. This has all the desired properties, i.e. it is invariant by the elements of $G_\mO$ and satisfies the appropriate symmetry conditions; therefore, this cross--ratio is sufficient for our purposes. Indeed, we prove Inequality \ref{eq:X-oct} in Proposition \ref{prop:X-oct} which is analogous to inequality \ref{eq-variety2}. From this inequality, the Ptolemaean inequality and the Theorem of Ptolemaeus in $\partial\bH^2_\mO$ (and subsequently in $\fH^{15}$) will follow as corollaries.

\subsection{Real cross--ratios}\label{sec:real-cr-oct}
For each quadruple of distinct points $\fp=(p_1,p_2,p_3,p_4)$ in $\overline{\fH^{15}}$ we define their real cross--ratio by
$$
\X(p_1,p_2,p_3,p_4)=\frac{d_{\fH^{15}}^2(p_4,p_2)\;d_{\fH^{15}}^2(p_3,p_1)}{d_{\fH^{15}}^2(p_4,p_1)\;d_{\fH^{15}}^2(p_3,p_2)},
$$
with the obvious modification if one of the points is $\infty$. It is evident that $\X(p_1,p_2,p_3,p_4)=$ $[p_1,p_2,p_3,p_4]$ is invariant by the action of $G$. Permuting the points we have the following symmetries:
$$
[p_1,p_2,p_3,p_4]=[p_2,p_1,p_4,p_3]=[p_3,p_4,p_1,p_2]=[p_4,p_3,p_2,p_1].
$$
Therefore, from the 24 cross--ratios corresponding to the permutations of a given quadruple, we end up with six cross--ratios which in turn are functions of the following two:
\begin{eqnarray}
&&\label{eq:X1-oct}
\X_1=[p_1,p_2,p_3,p_4]=\frac{d_{\fH^{15}}^2(p_4,p_2)\;d_{\fH^{15}}^2(p_3,p_1)}{d_{\fH^{15}}^2(p_4,p_1)\;d_{\fH^{15}}^2(p_3,p_2)},\\
&&\label{eq:X2-oct}
\X_2=[p_1,p_3,p_2,p_4]=\frac{d_{\fH^{15}}^2(p_4,p_3)\;d_{\fH^{15}}^2(p_2,p_1)}{d_{\fH^{15}}^2(p_4,p_1)\;d_{\fH^{15}}^2(p_2,p_3)}.
\end{eqnarray}
In fact, the following relations hold.
\begin{eqnarray*}
&&
[p_1,p_2,p_4,p_3]=\X_1^{-1},\\
&&
[p_1,p_3,p_4,p_2]=\X_2^{-1},\\
&&
[p_1,p_4,p_3,p_2]=\X_1\X_2^{-1},\\
&&
[p_1,p_4,p_2,p_3]=\X_2\X_1^{-1}.
\end{eqnarray*}

\medskip

We now prove a proposition analogous to Proposition \ref{prop-norm}.

\medskip

\begin{prop}\label{prop:X-oct}
Let $p_1,p_2,p_3,p_4$ be any four distinct points in ${\overline \fH^{15}}$. Let also $\X_1$ and $\X_2$ be defined by \ref{eq:X1-oct} and \ref{eq:X2-oct}. Then
\begin{equation}\label{eq:X-oct}
\X_1^2+\X_2^2-2\X_1-2\X_2-2\X_1\X_2+1\le 0.
\end{equation} 
Equality in \ref{eq:X-oct} holds if and only if all $p_i$, $i=1,\dots,4$ lie in the same $\R-$circle.
\end{prop}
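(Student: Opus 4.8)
The plan is to use the double transitivity of $G_\mO$ on $\overline{\fH^{15}}$ to reduce Inequality \ref{eq:X-oct} to a symmetric inequality among three mutual distances, and then to recognise that inequality as nothing more than the triangle inequality for $d_{\fH^{15}}$. Since the real cross--ratio is $G_\mO$--invariant, I would first normalise so that $p_1=\infty$ and $p_4=o$. With the obvious modification the two (infinite) distances to $\infty$ cancel in the defining ratios, leaving
$$
\X_1=\frac{d_{\fH^{15}}^2(o,p_2)}{d_{\fH^{15}}^2(p_2,p_3)},\qquad \X_2=\frac{d_{\fH^{15}}^2(o,p_3)}{d_{\fH^{15}}^2(p_2,p_3)}.
$$
Setting $a=d_{\fH^{15}}^2(o,p_2)$, $b=d_{\fH^{15}}^2(o,p_3)$ and $c=d_{\fH^{15}}^2(p_2,p_3)$, so that $\X_1=a/c$ and $\X_2=b/c$, multiplying \ref{eq:X-oct} through by $c^2>0$ converts it into
$$
a^2+b^2+c^2-2ab-2bc-2ca\le 0.
$$

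The crucial observation is the Heron--type factorisation
$$
a^2+b^2+c^2-2ab-2bc-2ca=-(\sqrt a+\sqrt b+\sqrt c)(-\sqrt a+\sqrt b+\sqrt c)(\sqrt a-\sqrt b+\sqrt c)(\sqrt a+\sqrt b-\sqrt c).
$$
Here $\sqrt a=d_{\fH^{15}}(o,p_2)$, $\sqrt b=d_{\fH^{15}}(o,p_3)$ and $\sqrt c=d_{\fH^{15}}(p_2,p_3)$ are precisely the side lengths of the triple $\{o,p_2,p_3\}$, so the last three factors are nonnegative by the triangle inequality for $d_{\fH^{15}}$ (which is a genuine metric by Proposition \ref{prop:dO-properties}); as the first factor is strictly positive, the product is $\le 0$. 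In contrast with the proof of Proposition \ref{prop-norm}, no inner product and no Cauchy--Schwarz argument intervenes: the real cross--ratio records only moduli, and the entire content of \ref{eq:X-oct} is the triangle inequality.

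For the equality assertion the ``if'' direction follows from invariance. An $\R$--circle is a $G_\mO$--image of $R_\R$, so I may assume all four points lie on $R_\R$, where a direct computation gives $d_{\fH^{15}}\big((-t^2,\sqrt2\,t),(-s^2,\sqrt2\,s)\big)=|t-s|$. Thus on $R_\R$ the metric is the Euclidean metric of the parameter line, $\X_1$ and $\X_2$ become the squares of the classical real cross--ratios $r,s$ of the four parameters, and the classical relation $r+s=1$ forces one of the three factors above to vanish.

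The ``only if'' direction is where I expect the real work to lie, because $d_{\fH^{15}}$ is not a path metric and equality in its triangle inequality is rigid. Suppose, say, equality of the form $d_{\fH^{15}}(p_2,p_3)=d_{\fH^{15}}(o,p_2)+d_{\fH^{15}}(o,p_3)$, i.e. $\sqrt c=\sqrt a+\sqrt b$. Passing to coordinates $x_i=-|\zeta_i|^2+v_i$, $y_i=\sqrt2\,\zeta_i$, I would combine the octonionic triangle inequality $c=|x_2+\overline{x_3}+\overline{y_3}y_2|\le |x_2|+|x_3|+2|\zeta_2||\zeta_3|$ with the pointwise bound $|x_i|\ge|\zeta_i|^2$; the hypothesis $\sqrt c=\sqrt a+\sqrt b$ squeezes both into equalities. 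This forces $v_2=v_3=0$ together with alignment of $x_2,\overline{x_3},\overline{y_3}y_2$ on a common ray, whence $\overline{\zeta_3}\zeta_2$ is a nonpositive real and $\zeta_2=\lambda\zeta_3$ with $\lambda\in\R$. The points $o,p_2,p_3,\infty$ then lie on the image of $R_\R$ under the $\mathrm{Spin}_7(\R)$--rotation carrying the real axis to the direction of $\zeta_3$, hence on a common $\R$--circle; the remaining two equality shapes are treated identically and correspond to the other separation patterns.
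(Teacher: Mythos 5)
Your proposal is correct, but it reaches \ref{eq:X-oct} by a genuinely different route than the paper. After the same normalisation $p_1=\infty$, $p_4=o$, the paper proves the inequality by a bare--hands octonionic computation (the chain starting from $2|q|^2\X_1\X_2=2|x_2|\,|x_3|\ge 2\Re(x_2\overline{x_3})$ together with the boundary relation $2\Re(x_i)+|y_i|^2=0$), deliberately parallel to the Cauchy--Schwarz proof of Proposition \ref{prop-norm}, so that the octonionic case mirrors Inequality \ref{eq-variety2}; you instead clear denominators, apply the Heron factorisation of $a^2+b^2+c^2-2ab-2bc-2ca$, and observe that the three nontrivial factors are exactly triangle inequalities for the triple $\{o,p_2,p_3\}$. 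This is legitimate, because Proposition \ref{prop:dO-properties} (quoted from \cite{MP}) supplies both the metric property of $d_{\fH^{15}}$ and the inversion rule that makes the real cross--ratios $G_\mO$--invariant; your argument thereby makes visible the known principle (cf.\ \cite{BS}) that, in the presence of such an inversion, the Ptolemaean inequality for quadruples through $o$ and $\infty$ \emph{is} the triangle inequality --- at the cost of outsourcing the hard estimate (the triangle inequality for $d_{\fH^{15}}$, itself a nontrivial octonionic computation in \cite{MP}) rather than re--deriving it, as the paper does, in a form whose equality cases are immediately analysable. Your rigidity analysis of the equality case is also sound and lands exactly where the paper's does ($v_2=v_3=0$, $\zeta_2=\lambda\zeta_3$ with $\lambda\in\R$, then a rotation), though via strict convexity of the Euclidean norm on $\R^8$ (alignment of $x_2$, $\overline{x_3}$, $\overline{y_3}y_2$ on a common ray) instead of the paper's extraction of positive reals $a_1,a_2,a_3$ from $x_2\overline{x_3}$, $x_2q$, $x_3\overline{q}$ and the imaginary--part contradiction. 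Two details deserve a line each: the remaining two equality shapes are not literally ``identical'', since the bound $|x_i|\ge|\zeta_i|^2$ enters your squeeze asymmetrically --- you should first move the middle point of the triple to $o$ by a left translation (an isometry fixing $\infty$, and carrying $\R$--circles to $\R$--circles) and then rerun the argument; and the steps ``$\overline{\zeta_3}\zeta_2\in\R$ implies $\zeta_2=\lambda\zeta_3$'' and ``a rotation carries the real axis to the direction of $\zeta_3$'' use, respectively, associativity of two--generated subalgebras and transitivity on unit octonions of the group generated by the $S_\mu$ --- both available in the paper (they also underlie its own final step with $S_\nu$ and $S_{\overline{\mu}}$), but worth citing explicitly.
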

\begin{proof}
Due to the double transitive action of $G$ in ${\overline \fH^{15}}$ we may normalise so that
$$
p_1=\infty,\quad p_2=(x_2,y_2),\quad p_3=(x_3,y_3),\quad p_4=o,
$$
where
$$
2\Re(x_i)+|y_i|^2=0,\quad i=2,3.
$$
We have
\begin{eqnarray*}
&&
d_{\fH^{15}}(p_1,p_2)=d_{\fH^{15}}(p_1,p_3)=d_{\fH^{15}}(p_1,p_4)=\infty,\\
&&
d_{\fH^{15}}(p_2,p_3)=|x_2+\overline{x_3}+\overline{y_3}y_2|^{1/2},\quad d_{\fH^{15}}(p_2,p_4)=|x_2|^{1/2},\\
&&
d_{\fH^{15}}(p_3,p_4)=|x_3|^{1/2},
\end{eqnarray*}
and therefore
$$
\X_1=\frac{|x_2|}{|x_3+\overline{x_2}+\overline{y_2}y_3|},\quad \X_2=\frac{|x_3|}{|x_2+\overline{x_3}+\overline{y_3}y_2|}.
$$
Set $q=\overline{x_2}+x_3+\overline{y_3}y_2$.
We calculate next
\begin{eqnarray*}
2|q|^2\X_1\X_2&=&2|x_2|\;|x_3|\\
&\ge &2\Re(x_2\overline{x_3})\\
&=&4\Re(x_2)\Re(x_3)-2\Re(x_2x_3)\\
&=&|y_2|^2|y_3|^2-2\Re(x_2x_3)\\
&=& \left|\overline{x_2}+x_3-(x_3+\overline{x_2}+\overline{y_2}y_3))\right|^2-2\Re(x_2x_3)\\
&=&|x_2|^2+|x_3|^2-2\Re(\left(x_2(x_3+\overline{x_2}+\overline{y_2}y_3)\right)\\
&&-2\Re\left(x_3(x_2+\overline{x_3}+\overline{y_3}y_2\right)+|x_3+\overline{x_2}+\overline{y_2}y_3|^2\\
&\ge &|x_2|^2+|x_3|^2-2|x_2||x_3+\overline{x_2}+\overline{y_2}y_3|-2|x_3||x_3+\overline{x_2}+\overline{y_2}y_3|+|x_3+\overline{x_2}+\overline{y_2}y_3|^2\\
&=&|q|^2\left(\X_1^2+\X_2^2-2\X_1-2\X_2+1\right),
\end{eqnarray*}
%\begin{eqnarray*}
%&&
%\X_1^2+\X_2^2-2\X_1-2\X_2+1=\\%-2\X_1\X_2+1=\\
%&&
%\frac{|x_2|^2+|x_3|^2-2|x_2||x_3+\overline{x_2}+\overline{y_2}y_3|-2|x_3||x_3+\overline{x_2}+\overline{y_2}y_3|+|x_3+\overline{x_2}+\overline{y_2}y_3|^2}{|x_3+\overline{x_2}+\overline{y_2}y_3|^2}\le\\
%&&
%\frac{|x_2|^2+|x_3|^2-2\Re(\left(x_2(x_3+\overline{x_2}+\overline{y_2}y_3)\right)-2\Re\left(x_3(x_2+\overline{x_3}+\overline{y_3}y_2\right)+|x_3+\overline{x_2}+\overline{y_2}y_3|^2}{|x_3+\overline{x_2}+\overline{y_2}y_3|^2}=\\
%&&
%\frac{\left|\overline{x_2}+x_3-(x_3+\overline{x_2}+\overline{y_2}y_3))\right|^2-2\Re(x_2x_3)}{|x_3+\overline{x_2}+\overline{y_2}y_3|^2}=\\
%&&
%\frac{|y_2|^2|y_3|^2-2\Re(x_2x_3)}{|x_3+\overline{x_2}+\overline{y_2}y_3|^2}=\\
%&&
%\frac{4\Re(x_2)\Re(x_3)-2\Re(x_2x_3)}{|x_3+\overline{x_2}+\overline{y_2}y_3|^2}=\\
%&&
%\frac{2\Re(x_2\overline{x_3})}{|x_3+\overline{x_2}+\overline{y_2}y_3|^2}\le\\
%&&
%\frac{2|x_2|\;|x_3|}{|x_3+\overline{x_2}+\overline{y_2}y_3|^2}=2\X_1\X_2.
%\end{eqnarray*}
where in the third line we have used the obvious equality
$$
2\Re(x_2)\Re(x_3)=\Re(x_2x_3)+\Re\left(x_2\overline{x_3}\right).
$$
Suppose now that equality holds in Equation \ref{eq:X-oct}. This means that all intermediate inequalities in the previous proof hold as equalities, namely
\begin{equation*}
\left|x_2q\right|=\Re\left(x_2q\right),\quad \left|x_3\overline{q}\right|=\Re\left(x_3\overline{q})\right),\quad |x_2x_3|=\Re\left(x_2\overline{x_3}\right).
\end{equation*}
%\begin{eqnarray}
%&&\label{eq:X-oct1}
%\left|x_2q\right|=\Re\left(x_2q\right),\\
%&&\label{eq:X-oct2}
%\left|x_3\overline{q}\right|=\Re\left(x_3\overline{q})\right),\\
%&&\label{eq:X-oct3}
%|x_2x_3|=\Re\left(x_2\overline{x_3}\right).
%\end{eqnarray}
%\begin{eqnarray}
%&&\label{eq:X-oct1}
%\left|x_2(\overline{x_2}+x_3+\overline{y_3}y_2)\right|=\Re\left(x_2(\overline{x_2}+x_3+\overline{y_3}y_2)\right),\\
%&&\label{eq:X-oct2}
%\left|x_3(\overline{x_3}+x_2+\overline{y_2}y_3)\right|=\Re\left(x_3(\overline{x_3}+x_2+\overline{y_2}y_3)\right),\\
%&&\label{eq:X-oct3}
%|x_2x_3|=\Re\left(x_2\overline{x_3}\right).
%\end{eqnarray}
%Set $q=\overline{x_2}+x_3+\overline{y_3}y_2$. 
From these relations %\ref{eq:X-oct1}, \ref{eq:X-oct2} and \ref{eq:X-oct3}
 we obtain that there exist non negative real numbers $a_1, a_2, a_3$ such that $x_2\overline{x_3}=a_1$. $x_2q=a_2$ and $x_3\overline{q}=a_3$. Observe that all $a_i$ are positive.
%\begin{enumerate}
%\item[{(a)}] $a_1$ is positive.
%\end{enumerate}
Indeed, if $a_1=0$, then $|x_2x_3|=|x_2|\;|x_3|=0$ which means that $\X_1\X_2=0$ which can not happen because the points $p_i$ have been considered distinct. On the other hand, if $a_2$ is zero, then $|x_2q|=|x_2|\;|q|=|x_2|d_\fH(p_2,p_3)=0$ which is again absurd and the same reasoning justifies $a_3>0$.  Therefore
\begin{equation*}\label{eq:X-oct4}
q=a_2x_2^{-1}=a_3\overline{x_3}^{-1},\quad x_2=a_1\overline{x_3}^{-1}.
\end{equation*}
%and also %From \ref{eq:X-oct3} we have that there exists a positive number $a_1$ such that $x_2\overline{x_3}=a_1$ and therefore
% $x_2=a_1\overline{x_3}^{-1}$. 
Combining %with \ref{eq:X-oct4}
 we get
\begin{equation*}\label{eq:X-oct5}
x_2=\frac{a_1a_2}{a_3}x_2^{-1}\quad\text{and}\quad x_3=\frac{a_1a_3}{a_2}x_3^{-1}.
\end{equation*}
Taking imaginary parts in both sides of these equations we obtain the relations
$$
\Im(x_2)=-\frac{a_1a_2}{a_3}\cdot\frac{\Im(x_2)}{|x_2|^2},\quad\text{and}\quad \Im(x_3)=-\frac{a_1a_3}{a_2}\cdot\frac{\Im(x_3)}{|x_3|^2},
$$
which are absurd unless $\Im(x_2)=\Im(x_3)=0$. Therefore $x_2,x_3\in\R_-$ and moreover, %additionally, by  \ref{eq:X-oct5} we also have
\begin{eqnarray*}
&&
x_2=-\left(\frac{a_1a_2}{a_3}\right)^{1/2},\quad x_3=-\left(\frac{a_1a_3}{a_2}\right)^{1/2},\\
&&
|y_2|=\sqrt{2}\left(\frac{a_1a_2}{a_3}\right)^{1/4},\quad
|y_3|=\sqrt{2}\left(\frac{a_1a_3}{a_2}\right)^{1/4}.
\end{eqnarray*}
Since $x_2$ and $x_3$ are neqative, we  have
$$
|x_2q|=-x_2|q|=x_2\Re(q),
$$
thus $q\in\R_-$ and in particular $\overline{y_3}y_2=\overline{y_2}y_3=a\in\R_*$. Setting $y_2=|y_2|\mu_2$ and $y_3=|y_3|\mu_3$ where $\mu_2$ and $\mu_3$ are unit octonions we obtain
$$
\overline{\mu_3}\mu_2=\overline{\mu_2}\mu_3=\overline{\overline{\mu_3}\mu_2}
$$
and thus $\overline{\mu_3}\mu_2=\pm 1$, i.e. $\mu=\mu_2=\pm\mu_3$ and
$$
y_2=\sqrt{2}\left(\frac{a_1a_2}{a_3}\right)^{1/4}\mu,\quad y_3=\pm\sqrt{2}\left(\frac{a_1a_3}{a_2}\right)^{1/4}\mu.
$$
By applying a transformation $S_\nu$ we may assume that $\mu$ is an imaginary octonion (pick a unit imaginary octonion $\nu$ orthogonal to $\Im(\mu)$).  By applying $S_{\overline{\mu}}$ we may also assume that $y_2$ and $y_3$ are real.  
This gives in total
$$
x_3=\left(\frac{a_3}{a_2}\right)^{1/2}x_2,\quad y_3=\pm\left(\frac{a_3}{a_2}\right)^{1/4}y_2,
$$
and thus our points lie in an $\R-$circle.

We further observe that in the case of the positive sign for $y_3$ we have $\X_1^{1/2}-\X_2^{1/2}=\pm 1$ and in the case of the negative sign we have $\X_1^{1/2}+\X_2^{1/2}=1$.

The reader should now verify  easily that if all points lie in an $\R-$circle, then Inequality \ref{eq:X-oct} holds as an equality.

\end{proof}

\medskip

Since Inequality \ref{eq:X-oct} is written equivalently as
$$
\left(\X_1^{1/2}-\X_2^{1/2}-1\right)\cdot\left(\X_1^{1/2}-\X_2^{1/2}+1\right)\cdot\left(\X_1^{1/2}+\X_2^{1/2}-1\right)\cdot\left(\X_1^{1/2}+\X_2^{1/2}+1\right)\le 0,
$$
then working in the manner of the proof of the Ptolemaean Inequality \ref{thm:ptolemaean-ineq}, we obtain the Ptolemaean inequality in the boundary of the octonionic hyperbolic plane as a corollary to Proposition \ref{prop:X-oct}.

\medskip

\begin{thm}\label{thm:ptolemaean-ineq-oct}{\bf (Ptolemaean inequality  in $\partial\bH^2_\mO)$}
Let $\fp=(p_1,p_2,$ $p_3,p_4)$ a quadruple of distinct points in $\partial \bH_\mO^2$ and $\X_i=\X_i(\fp)$, $i=1,2$  its corresponding real  cross--ratios defined in \ref{eq:X1-oct} and \ref{eq:X2-oct} respectively.  Then  the following inequalities hold:
 \begin{equation*}
  \X_1^{1/2}+\X_2^{1/2}\ge 1 \quad \text{and}\quad -1\le\X_1^{1/2}-\X_2^{1/2}\le 1.
 \end{equation*}

\end{thm}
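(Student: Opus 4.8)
The plan is to deduce both inequalities directly from Proposition~\ref{prop:X-oct}, following verbatim the argument already used for Theorem~\ref{thm:ptolemaean-ineq}. The essential simplification in the octonionic setting is that $\X_1$ and $\X_2$ are, by their very definition in \ref{eq:X1-oct} and \ref{eq:X2-oct}, genuine positive real numbers (quotients of squares of the metric $d_{\fH^{15}}$); hence no passage to absolute values or real parts is needed and the factorisation below is exact.

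First I would record the factored form of Inequality~\ref{eq:X-oct}, namely that
$$
\left(\X_1^{1/2}-\X_2^{1/2}-1\right)\left(\X_1^{1/2}-\X_2^{1/2}+1\right)\left(\X_1^{1/2}+\X_2^{1/2}-1\right)\left(\X_1^{1/2}+\X_2^{1/2}+1\right)\le 0.
$$
This is an elementary rearrangement: the left-hand side equals $\bigl((\X_1^{1/2}-\X_2^{1/2})^2-1\bigr)\bigl((\X_1^{1/2}+\X_2^{1/2})^2-1\bigr)$, which one checks expands to $\X_1^2+\X_2^2-2\X_1-2\X_2-2\X_1\X_2+1$. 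Writing $u=\X_1^{1/2}\ge 0$ and $v=\X_2^{1/2}\ge 0$, the last factor $u+v+1$ is strictly positive, so I may divide it out and reduce everything to $\bigl((u-v)^2-1\bigr)(u+v-1)\le 0$.

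The two desired bounds then follow from a short sign analysis. For $-1\le u-v\le 1$ I argue by contradiction: if $(u-v)^2>1$, then since $(u+v)^2-(u-v)^2=4uv\ge 0$ we also have $(u+v)^2\ge(u-v)^2>1$, whence $u+v-1>0$ and $(u-v)^2-1>0$; a product of two positive factors cannot be $\le 0$, a contradiction. Hence $(u-v)^2\le 1$, which is exactly $-1\le\X_1^{1/2}-\X_2^{1/2}\le 1$. For $u+v\ge 1$: if $(u-v)^2<1$ then $(u-v)^2-1<0$, forcing $u+v-1\ge 0$ so that the product remains $\le 0$; and if $(u-v)^2=1$ then $|u-v|=1\le u+v$ holds automatically because $u,v\ge 0$. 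Either way $\X_1^{1/2}+\X_2^{1/2}\ge 1$, completing the proof.

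I do not expect a genuine obstacle at this stage: all the analytic content — the octonionic Cauchy--Schwarz estimate and the bookkeeping forced by non-associativity of octonionic multiplication — has already been absorbed into Proposition~\ref{prop:X-oct}, so the theorem is effectively a corollary. The only point demanding care is the case split above, and in particular the borderline case $(u-v)^2=1$, where the three-factor product vanishes identically; there the bound $u+v\ge 1$ must be extracted from the positivity of $u$ and $v$ rather than from the inequality itself.
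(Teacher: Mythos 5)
Your proposal is correct and takes essentially the same route as the paper, which likewise obtains Theorem \ref{thm:ptolemaean-ineq-oct} as a corollary of Proposition \ref{prop:X-oct} by recording the same factorisation $\left(\X_1^{1/2}-\X_2^{1/2}-1\right)\left(\X_1^{1/2}-\X_2^{1/2}+1\right)\left(\X_1^{1/2}+\X_2^{1/2}-1\right)\left(\X_1^{1/2}+\X_2^{1/2}+1\right)\le 0$ and then repeating verbatim the sign analysis from the proof of Theorem \ref{thm:ptolemaean-ineq}. Your explicit handling of the borderline case $(\X_1^{1/2}-\X_2^{1/2})^2=1$, where the bound $\X_1^{1/2}+\X_2^{1/2}\ge 1$ comes from positivity of the square roots rather than from the inequality itself, is a small but welcome tightening of a point the paper leaves implicit.
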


Ptolemaeus' Theorem then follows directly from the necessary and sufficient condition for equality in Inequality \ref{eq:X-oct}.

\medskip 

\begin{thm}{(\bf Ptolemaeus' Theorem in $\partial\bH^2_\mO$)}\label{thm:ptolemaean-eq-oct}
 Each inequality  \ref{thm:ptolemaean-ineq-oct} holds if and only if all four points of $\fp$ lie in an $\R-$circle. Then, 
\begin{enumerate}
 \item $\X_1^{1/2}-\X_2^{1/2}=1$ if $p_1$ and $p_3$ separate $p_2$ and $p_4$;
 \item $\X_2^{1/2}-\X_1^{1/2}=1$ if  $p_1$ and $p_2$ separate $p_3$ and $p_4$;
 \item $\X_1^{1/2}+\X_2^{1/2}=1$ if  $p_1$ and $p_4$ separate $p_2$ and $p_3$.
\end{enumerate}
\end{thm}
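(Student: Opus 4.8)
The plan is to read off the entire statement from Proposition \ref{prop:X-oct}, so that only the bookkeeping of the three separation cases remains. First I would record that the Ptolemaean inequality of Theorem \ref{thm:ptolemaean-ineq-oct} is nothing but the factored form
$$
\left(\X_1^{1/2}-\X_2^{1/2}-1\right)\left(\X_1^{1/2}-\X_2^{1/2}+1\right)\left(\X_1^{1/2}+\X_2^{1/2}-1\right)\left(\X_1^{1/2}+\X_2^{1/2}+1\right)\le 0
$$
of Inequality \ref{eq:X-oct}. Since $\X_1,\X_2>0$ the last factor is strictly positive, so any one of the three Ptolemaean inequalities becomes an equality precisely when one of the first three factors vanishes, that is precisely when the whole product vanishes, i.e. when \ref{eq:X-oct} is an equality. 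By Proposition \ref{prop:X-oct} this occurs if and only if $p_1,\dots,p_4$ lie on a common $\R$-circle. This establishes the ``if and only if'' clause for all three statements simultaneously.

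It then remains to match the vanishing factor to the separation of the points. Using the double transitivity of $G_\mO$ I would normalise $p_1=\infty$, $p_4=o$. Since the four points now lie on an $\R$-circle through $\infty$ and $o$, a further element of $G_\mO$ fixing $\infty$ and $o$ (dilations and the $S_\mu$) carries this $\R$-circle to the standard one $R_\R$, so that
$$
p_2=(-t_2^2,\sqrt{2}\,t_2),\qquad p_3=(-t_3^2,\sqrt{2}\,t_3)
$$
for distinct nonzero reals $t_2,t_3$. A direct computation with $d_{\fH^{15}}$ gives $d_{\fH^{15}}(p_2,p_4)=|t_2|$, $d_{\fH^{15}}(p_3,p_4)=|t_3|$ and $d_{\fH^{15}}(p_2,p_3)=|t_2-t_3|$, whence
$$
\X_1^{1/2}=\frac{|t_2|}{|t_2-t_3|},\qquad \X_2^{1/2}=\frac{|t_3|}{|t_2-t_3|}.
$$

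Finally I would read off the three cases. In this picture $R_\R\cup\{\infty\}$ is a topological circle parametrised by $t\in\R\cup\{\infty\}$, with $p_1\leftrightarrow\infty$, $p_4\leftrightarrow 0$, $p_2\leftrightarrow t_2$, $p_3\leftrightarrow t_3$. The three identities $|t_2|-|t_3|=|t_2-t_3|$, $|t_3|-|t_2|=|t_2-t_3|$ and $|t_2|+|t_3|=|t_2-t_3|$ hold respectively exactly when $t_3$ lies strictly between $0$ and $t_2$, when $t_2$ lies strictly between $0$ and $t_3$, and when $t_2,t_3$ have opposite signs; by the description of the arcs cut out by $\{\infty,t_3\}$, by $\{\infty,t_2\}$, and by $\{\infty,0\}$ these are exactly the conditions that $\{p_1,p_3\}$ separates $\{p_2,p_4\}$, that $\{p_1,p_2\}$ separates $\{p_3,p_4\}$, and that $\{p_1,p_4\}$ separates $\{p_2,p_3\}$. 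Substituting into the formulas for $\X_1^{1/2},\X_2^{1/2}$ yields equations (1)--(3). The step I expect to be most delicate is purely organisational rather than computational: translating the three sign/magnitude regimes for $(t_2,t_3)$ into the cyclic separation relation on $\R\cup\{\infty\}$, and verifying that the normalisation fixing $\infty$ and $o$ can indeed send the given $\R$-circle to $R_\R$ with $p_4$ at the parameter $0$. Both are routine, and the underlying sign analysis has essentially been carried out already at the end of the proof of Proposition \ref{prop:X-oct}, where the $\pm$ cases were seen to give $\X_1^{1/2}-\X_2^{1/2}=\pm1$ and $\X_1^{1/2}+\X_2^{1/2}=1$.
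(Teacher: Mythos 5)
Your proof is correct and takes essentially the same route as the paper: the theorem there is derived directly from the equality case of Proposition \ref{prop:X-oct} (whose proof already records the sign dichotomy $\X_1^{1/2}-\X_2^{1/2}=\pm 1$ versus $\X_1^{1/2}+\X_2^{1/2}=1$), with the matching of equalities to separation done exactly as you do it — normalising $p_1=\infty$, $p_4=o$ onto the standard $\R$-circle and computing, mirroring the converse step of Theorem \ref{thm:ptolemaean-eq}. Your write-up simply makes explicit the parametrisation $t\mapsto(-t^2,\sqrt{2}\,t)$, the resulting formulas $\X_1^{1/2}=|t_2|/|t_2-t_3|$, $\X_2^{1/2}=|t_3|/|t_2-t_3|$, and the stabiliser argument that the paper leaves to the reader.
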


\medskip

We finally state

\medskip

\begin{thm}\label{thm:ptolemaean-heis-oct} {\bf (Ptolemaean inequality and  Ptolemaeus' Theorem in the variety $\fH^{15}$)}
 The  metric $d_\fH$ in  $\fH^{15}$ satisfies the Ptolemaean inequality: for each quadruple of points $\fp=(p_1,p_2,p_3,p_4)$, inequalities \ref{eq:ineq-heis1}, \ref{eq:ineq-heis2} and \ref{eq:ineq-heis3} hold. Moreover, each of these inequalities hold as an equality if and only if all points lie in an $\R-$circle. Explicitly,
\begin{enumerate}
\item $d_\fH(p_2,p_3)\cdot d_\fH(p_1,p_4)=d_\fH(p_2,p_4)\cdot d_\fH(p_1,p_3)+d_\fH(p_1,p_2)\cdot d_\fH(p_3,p_4)$ if and only if all points lie in an $\R-$circle and $p_1$ and $p_4$ separate $p_2$ and $p_3$;
\item $d_\fH(p_1,p_3)\cdot d_\fH(p_2,p_4)=d_\fH(p_1,p_2)\cdot d_\fH(p_3,p_4)+d_\fH(p_2,p_3)\cdot d_\fH(p_1,p_4)$ if and only if all points lie in an $\R-$circle and $p_1$ and $p_3$ separate $p_2$ and $p_4$;
\item $d_\fH(p_1,p_2)\cdot d_\fH(p_3,p_4)=d_\fH(p_1,p_3)\cdot d_\fH(p_2,p_4)+d_\fH(p_2,p_3)\cdot d_\fH(p_1,p_4)$ if and only if all points lie in an $\R-$circle and $p_1$ and $p_2$ separate $p_3$ and $p_4$.
\end{enumerate}
\end{thm}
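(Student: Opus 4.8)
The plan is to deduce this theorem directly from the cross--ratio formulation already established in Theorems \ref{thm:ptolemaean-ineq-oct} and \ref{thm:ptolemaean-eq-oct}, exactly as Theorem \ref{thm:ptolemaean-heis} was obtained in the case $\K=\R,\C,\mH$. The only work is to rewrite the inequalities $\X_1^{1/2}+\X_2^{1/2}\ge 1$ and $-1\le\X_1^{1/2}-\X_2^{1/2}\le 1$ as the three distance inequalities \ref{eq:ineq-heis1}, \ref{eq:ineq-heis2} and \ref{eq:ineq-heis3}, and then to match up the equality cases.

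First I would observe that, since $p_1,p_2,p_3,p_4$ are distinct finite points of $\fH^{15}$, every value $d_{\fH^{15}}(p_i,p_j)$ is positive and finite, and $d_{\fH^{15}}$ is symmetric. Taking square roots in \ref{eq:X1-oct} and \ref{eq:X2-oct} yields
$$
\X_1^{1/2}=\frac{d_{\fH^{15}}(p_2,p_4)\,d_{\fH^{15}}(p_1,p_3)}{d_{\fH^{15}}(p_1,p_4)\,d_{\fH^{15}}(p_2,p_3)},\qquad
\X_2^{1/2}=\frac{d_{\fH^{15}}(p_3,p_4)\,d_{\fH^{15}}(p_1,p_2)}{d_{\fH^{15}}(p_1,p_4)\,d_{\fH^{15}}(p_2,p_3)},
$$
so both carry the common positive denominator $D:=d_{\fH^{15}}(p_1,p_4)\,d_{\fH^{15}}(p_2,p_3)$. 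Multiplying each of the three inequalities of Theorem \ref{thm:ptolemaean-ineq-oct} through by $D$ then clears denominators: $\X_1^{1/2}+\X_2^{1/2}\ge 1$ becomes \ref{eq:ineq-heis1}, the inequality $\X_1^{1/2}-\X_2^{1/2}\le 1$ becomes \ref{eq:ineq-heis2}, and $\X_2^{1/2}-\X_1^{1/2}\le 1$ becomes \ref{eq:ineq-heis3}. This establishes the Ptolemaean inequality for $d_{\fH^{15}}$.

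For the equality statements I would invoke Theorem \ref{thm:ptolemaean-eq-oct}, which asserts that equality in any of the cross--ratio inequalities forces all four points onto a single $\R-$circle and identifies the relevant separation. Transporting this through the same substitution, equality in \ref{eq:ineq-heis1} is equivalent to $\X_1^{1/2}+\X_2^{1/2}=1$, which by part (3) of Theorem \ref{thm:ptolemaean-eq-oct} holds iff $p_1,p_4$ separate $p_2,p_3$; equality in \ref{eq:ineq-heis2} is equivalent to $\X_1^{1/2}-\X_2^{1/2}=1$, hence by part (1) to $p_1,p_3$ separating $p_2,p_4$; and equality in \ref{eq:ineq-heis3} is equivalent to $\X_2^{1/2}-\X_1^{1/2}=1$, hence by part (2) to $p_1,p_2$ separating $p_3,p_4$. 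These are precisely statements (1), (2) and (3) of the theorem.

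There is no serious obstacle here: the substance lies entirely in Proposition \ref{prop:X-oct} and its corollaries. The only point demanding care is the bookkeeping of permutation labels, namely confirming that after using the symmetry $d_{\fH^{15}}(p_i,p_j)=d_{\fH^{15}}(p_j,p_i)$ the two cross--ratios genuinely share the denominator $D$, and that each separation condition is attached to the correct inequality. Everything else is routine algebraic rearrangement.
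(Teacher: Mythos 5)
Your proposal is correct and follows essentially the same route as the paper, which presents Theorem \ref{thm:ptolemaean-heis-oct} without separate proof as an immediate corollary of Theorems \ref{thm:ptolemaean-ineq-oct} and \ref{thm:ptolemaean-eq-oct}, obtained by clearing the common positive denominator $d_{\fH^{15}}(p_1,p_4)\,d_{\fH^{15}}(p_2,p_3)$ exactly as in the passage from Theorem \ref{thm:ptolemaean-ineq} to Theorem \ref{thm:ptolemaean-heis}. Your matching of the three equality cases to the separation conditions (1)--(3) agrees with the paper's.
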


\end{document}